\tikzstyle{vertex}=[circle, draw, inner sep=0pt, minimum size=6pt]
\newtheorem{definition}{Definition}
\newtheorem{lemma}{Lemma}
\newtheorem{assumption}{Assumption}
\newtheorem{proof of proposition}{Proof of Proposition}
\newtheorem{proof of claim}{Proof of Claim}
\newtheorem{proposition}{Proposition}
\newtheorem{theorem}{Theorem}
\newtheorem{corollary}{Corollary}
\title{Bregman Proximal Gradient Algorithm with Extrapolation for a class of Nonconvex Nonsmooth Minimization Problems}
\date{Preprint submitted for publication, February 14, 2019}
\author{Xiaoya Zhang\footnotemark[1]\ \footnotemark[5] ,
Roberto Barrio\footnotemark[2] \ \footnotemark[4] ,
M. Angeles Mart\'{\i}nez\footnotemark[2]\ \footnotemark[4] ,
Hao Jiang\footnotemark[3]\ \footnotemark[5]  ,
Lizhi Cheng\footnotemark[1] \ \footnotemark[5]
}
\begin{document}
\maketitle

\renewcommand{\thefootnote}{\fnsymbol{footnote}}
\footnotetext[1]{Department of Mathematics, National University of Defense Technology, Changsha, 410073, Hunan, China. (Email: \texttt{zhangxiaoya09,clzcheng@nudt.edu.cn})}
\footnotetext[2]{Departamento de Matem\'atica Aplicada and IUMA. Computational Dynamics group.
University of Zaragoza. E-50009. Spain. (Email: \texttt{rbarrio@unizar.es})}
\footnotetext[3]{College of Computer, National University of Defense Technology, Changsha, 410073, Hunan, China. (Email: \texttt{{haojiang@nudt.edu.cn}})}
\footnotetext[4]{The authors X.Z., H.J. and L.C. were supported by the National Key Research and Development Program of China (NO.2017YFB0202003), National Natural Science Foundation of Hunan (2018JJ3616).}
\footnotetext[5]{ The authors R.B. and A.M. were supported by the Spanish Research projects MTM2015-64095-P, PGC2018-096026-B-I00, and European Regional Development Fund and Diputaci\'on General de Arag\'on (E24-17R).}
\renewcommand{\thefootnote}{\arabic{footnote}}

\begin{abstract}
In this paper, we consider an accelerated method for solving nonconvex and nonsmooth minimization problems. We propose a Bregman Proximal Gradient algorithm with extrapolation(BPGe). This algorithm extends and accelerates the Bregman Proximal Gradient algorithm (BPG), which circumvents the
restrictive global Lipschitz gradient continuity assumption needed in Proximal Gradient algorithms (PG).
The BPGe algorithm has higher generality than the recently introduced Proximal Gradient algorithm with extrapolation(PGe), and besides,  due to the extrapolation step, BPGe converges faster than BPG algorithm.
Analyzing the convergence, we prove  that any limit point of the sequence generated by BPGe is a stationary point of the problem by choosing parameters properly. Besides, assuming Kurdyka-{\L}ojasiewicz property, we prove the whole sequences generated by BPGe converges to a stationary point.
Finally, to illustrate the potential of the new method BPGe, we apply it to two important practical problems that arise in many fundamental applications (and that not satisfy global Lipschitz gradient continuity assumption): Poisson linear inverse problems and quadratic inverse problems. In the tests the accelerated BPGe algorithm shows faster convergence results, giving an interesting new algorithm.\\

\noindent\emph{Keywords}:
Bregman Proximal Gradient algorithm with extrapolation, Bregman Distance, Proximal Gradient Algorithm, Smooth Adaptive Condition, Relative Weakly Convexity\\

\noindent\emph{MSC codes}: 90C30, 90C26, 47N10
\end{abstract}

\pagestyle{myheadings}
\thispagestyle{plain}
%\markboth{Homoclinic organization in fold/hom bursters}{Homoclinic organization in fold/hom bursters}

\section{Introduction}\label{Sec:1}
In the last few years different numerical methods have been devised to solve large-scale minimization problems, but still the Cauchy's gradient method is at the kernel of most of the schemes (for instance, see the recent books \cite{Bertsekas2009,borwein2010convex} and it is assumed that the gradient of the objective function is globally Lipschitz continuous. This assumption is quite restrictive in some real applications, and therefore recently new families of methods have been designed in order to solve more generic cases. On this line, the remarkable paper of Bauschke, Bolte and Teboulle \cite{bauschke2016descent} introduced a new method based on the Bregman distance paradigm (BPG algorithm) able to deal with non-globally Lipschitz continuous gradient problems in the convex case, and Bolte, Sabach, Reboulle and Vaisbourd \cite{bolte2018first} extend it to the nonconvex case.

On the other hand, a lot of effort has been paid to accelerate the proximal gradient algorithm in order to reduce the number of iterations. Several techniques have been introduced, like the fast iterative shrinkage-thresholding algorithm (FISTA) proposed in \cite{beck2009fast}, the use of Nesterov's extrapolation techniques \cite{Nesterov1983,Nesterov2007}, and quite recently it has been introduced in \cite{wen2017linear} a version of the proximal gradient algorithm with extrapolation for some nonconvex nonsmooth minimization problems (but assuming that the gradient of the objective function is globally Lipschitz continuous).

The main goal of this paper is to focus on introducing a scheme, and analyzing the convergence,  that combines the power of the method developed in
\cite{bolte2018first} able to solve non-globally Lipschitz continuous gradient problems in the convex and nonconvex case, and that includes extrapolation techniques \cite{wen2017linear} in order to accelerate the method.

In this paper, we consider the following minimization problem:
\begin{align}\label{Eq:P}
\tag{P}
\inf \{ \Psi(x) := f(x) + g(x): x \in \mathbb{R}^d \}.
\end{align}
where $f$ is a nonconvex continuously differentiable function and $g$ is a proper lower-semi-continuous (l.s.c.) convex function. We assume that the optimal value of (\ref{Eq:P}) is finite, that is,  $\Psi^{\ast} := \inf \{ \Psi(u): u \in  \mathbb{R}^d \} > -\infty$.
Problem (\ref{Eq:P}) arises in many applications including compressed sensing \cite{donoho2006compressed}, signal recovery \cite{beck2013sparsity}, phase retrieve problem \cite{luke2017phase}.
One classical algorithm for solving this problem is the Proximal Gradient (PG) method \cite{parikh2014proximal}:
\begin{align*}
&x^{k+1} = \arg\min_{x} \bigg\{ g(x) + \langle \nabla f(x^k), x-x^k \rangle + \frac{1}{2\lambda_k}\|x-x^k\|^2 \bigg\}, k \in \mathbb{N},
\end{align*}
where $\lambda_k$ is the stepsize on each iteration.
Proximal gradient method and its variants \cite{schmidt2011convergence,jiang2012inexact, xiao2014proximal,nitanda2014stochastic, chen2012fast,vanli2018global} have been one hot topic in optimization field for a long time due to their simple forms and lower computation complexity.

One branch of developing new PG methods was devoted to accelerations. Accelerated proximal algorithms \cite{beck2009fast,toh2010accelerated} on convex problems have shown to be quite efficient. They were also useful for solving nonconvex problems \cite{wen2017linear,ghadimi2016accelerated,li2015accelerated,carmon2018accelerated}.
For solving nonconvex problems (\ref{Eq:P}), one simple and efficient strategy is to perform extrapolation for each $k \in \mathbb{N}$, with the following form(where $x^{-1}=x^0$)
\begin{equation*}
\left \{
\begin{aligned}
&y^k = x^k + \beta_k(x^k - x^{k-1}),\\
&x^{k+1} = \arg\min_{x} \bigg\{ g(x) + \langle \nabla f(y^k), x-y^k \rangle + \frac{1}{2\lambda_k}\|x-y^k\|^2 \bigg\},
\end{aligned}
\right.
\end{equation*}
where $\lambda_k$ is the stepsize on each iteration, and $\beta_k(x^k - x^{k-1})$ is an extrapolation term.
The previous iteration is called the Proximal Gradient algorithm with Extrapolation (PGe), which have been shown in \cite{wen2017linear} that converges and performs quite well by setting parameters $\beta_k$ properly.
However, PGe has one restriction on solving problem (\ref{Eq:P}): it requires the continuously differentiable part $f$ to be globally Lipschitz gradient continuous on $\mathbb{R}^d$. In fact, this requirement cannot often be satisfied for many practical problems, such as quadratic inverse problem in phase retrieve \cite{luke2017phase} and Poisson linear inverse problems \cite{bertero2009image}, that arise in many real world applications.

In this paper, we propose a new algorithm ---\emph{Bregman Proximal Gradient algorithm with Extrapolation (BPGe)}--- to solve problem (\ref{Eq:P}) without requiring globally Lipschitz gradient continuity of $f$ for each $k \in \mathbb{N}$, from $x^{-1}=x^0$:
\begin{equation*}
\left \{
\begin{aligned}
&y^k = x^k + \beta_k(x^k - x^{k-1}),\\
&x^{k+1} = \arg\min_{x} \bigg\{ g(x) + \langle \nabla f(y^k), x-y^k \rangle + \frac{1}{\lambda_k}D_h(x,y^k)\bigg\},
\end{aligned}
\right.
\end{equation*}
where $D_h$ is a Bregman distance defined in Section~\ref{Sec:2}.
On the basis of Bregman distance theory, we utilize a smooth adaptive condition introduced in \cite{bolte2018first}, which generalizes Lipschitz gradient continuous condition. This smooth adaptive condition was originally proposed to analyze Bregman Proximal Gradient (BPG) algorithm in \cite{bolte2018first}. It can also be used to analyze the convergence of BPGe, since BPGe algorithm extends BPG one by performing extrapolation. In particular, we have that:
\begin{enumerate}[(i)]
\item When $D_h(x,y) = \frac{1}{2}\|x-y\|^2$ and $\beta_k = 0$, BPGe reduces to PG.
\item When $D_h(x,y) = \frac{1}{2}\|x-y\|^2$, BPGe reduces to PGe;
\item When $\beta_k = 0 ~\text{for any}~ k \geq 0$, BPGe reduces to BPG (no extrapolation).
\end{enumerate}
Therefore, PG, PGe and BPG are particular cases of BPGe algorithm.

Recently, other acceleration algorithms for BPG have been proposed in literature, like using it combined with inertial methods~\cite{boct2016inertial} (which is a different methodology from ours), or combining it with Nesterov's acceleration method \cite{hanzely2018accelerated} but requiring the Bregman distance function satisfying some extra crucial triangle scaling property.

From the convergence analysis (Section~\ref{Sec:4}), the BPGe algorithm has to satisfy the condition $D_h(x^k,y^k) \leq \rho \,  C_k \, D_h(x^{k-1},x^k)$ (where $C_k \in (0,1]$ and $\rho \in (0,1)$ are two parameters) to guarantee the convergence.
In the Lipschitz gradient continuous condition $D_h(x,y) = \frac{1}{2}\|x-y\|^2$, this condition is easily satisfied just by choosing $\inf_{k\in \mathbb{N}}\{\beta_k\} \leq \sqrt{\rho \, C}$. But when $D_h$ is general, computing a threshold of $\inf_{k\in \mathbb{N}}\{\beta_k\}$ directly may be hard and expensive. Therefore, we modify this idea to achieve this condition through a line search method (Algorithm~2 introduced in Section~\ref{Sec:3}).

In the convergence analysis, we prove that any limit point of the sequence generated by BPGe is a stationary point under very general conditions. Moreover, by adding some slightly stronger assumptions and Kurdyka-{\L}ojasiewicz property, we could guarantee the sequence generated by BPGe converges to a stationary point.

The paper is organized as follows. We first introduce in Section~\ref{Sec:2}
some basic definitions in optimization, smooth adaptive condition, relative weak convexity, and Kurdyka-{\L}ojasiewicz property.
In Section~\ref{Sec:3}  we introduce the new BPGe algorithm.
The convergence analysis is done in Section~\ref{Sec:4}, where under some assumptions of the smooth adaptive condition and relative weak convexity of problem (\ref{Eq:P}), we first show a descent-type lemma, from which the fact that any limit point of the sequence generated by BPGe is a critical point follows. Later, we prove that the whole sequence generated by BPGe converges to a critical point under Kurdyka-{\L}ojasiewicz property and a stronger assumption.
Several numerical experiments are shown in Section~\ref{Sec:5} to show the performance of the BPGe method compared with the BPG one.

\section{Preliminaries}
\label{Sec:2}
Throughout the paper we will use the following basic notations. Let $\mathbb{N}:= \{0, 1, 2, \dots\}$ be the set of nonnegative integers. We will always work in the Euclidean space $\mathbb{R}^d$, and
the standard Euclidean inner product and the induced norm on $\mathbb{R}^d$ are denoted by $\langle \cdot,\cdot\rangle$ and $\| \cdot\|$, respectively. We denote $B_{\rho}(\tilde{x}):=\{x \in \mathbb{R}^d : \|x-\tilde{x}\|\leq \rho \}$ as the ball of radius $\rho>0$ around $\tilde{x} \in \mathbb{R}^d$, $\text{dist}(x, \mathcal{S}):= \inf_{y \in \mathcal{S}}\|x-y\|$ as the distance from a point $x \in \mathbb{R}^d$ to a nonempty set $\mathcal{S} \subset \mathbb{R}^d$. The domain of the function $f : \mathbb{R}^d \rightarrow (-\infty,+\infty]$ is defined by $\text{dom}~f = \{x \in \mathbb{R}^d: f(x)< +\infty\}$. We say that $f$ is proper if $\text{dom}~f \neq \emptyset$.
For other generalized notions and definitions we refer to \cite{bolte2018first,rockafellar2015convex, rockafellar2009variational}.

\subsection{Smooth Adaptable Function and Relative Weakly Convexity}

In this subsection, we define the notion of \emph{smooth adaptable} condition for nonconvex $f$ proposed in \cite{bolte2018first}.
This property was extended from the recent work~\cite{bauschke2016descent} in which the differentiable functions need to be convex.
This condition is similar to the relative smoothness condition introduced in \cite{lu2018relatively}, but the relative smoothness is based on the fact that $f$ is convex. As we want also to deal with nonconvex functions, in our paper we use the smooth adaptable condition to generalize Lipschitz gradient continuity and to derive the related convergence results of BPGe.

We first introduce the concept of Bregman distance needed in the definition of smooth adaptable condition.

\begin{definition}(Kernel Generating Distance and Bregman Distance \cite{bolte2018first}) \label{Definition:Breg}
 Let $S$ be a nonempty, convex and open subset of $\mathbb{R}^d$. Associated with $S$, a function $h:\mathbb{R}^d  \rightarrow (-\infty, \infty] $  is called a \emph{kernel generating distance} if it satisfies the following:
\begin{enumerate}[(i)]
\item $h$ is proper, lower-semi-continuous and convex, with $\rm{dom}~h \subset \overline {S}$ and $\rm{dom}~\partial h = S$.
\item $h$ is $\mathcal{C}^1$ on $\rm{int}~\rm{dom}~h \equiv S$.
\end{enumerate}
 We denote the class of kernel generating distances by $\mathcal{G}(S)$.
Given $h \in \mathcal{G}(S)$, the \emph{Bregman distance} \cite{bregman1967relaxation} is defined by $D_h:\rm{dom}~h \times \rm{int}~\rm{dom}~h \rightarrow [0, +\infty)$
$$D_h(x, y) := h(x) - h(y)- \langle \nabla h(y),x-y \rangle. $$
\end{definition}
Note that the Bregman distance is, obviously, a proximity measure that measures the proximity of $x$ and $y$.
Next, we list some basic properties of the Bregman distance \cite{chen1993convergence, teboulle2018simplified}:
\begin{enumerate}[(i)]
\item For any $(x, y) \in \text{dom}~h \times \text{int}~\text{dom}~h$, $D_h(x, y) \geq 0.$ If in addition $h$ is strictly convex, $D_h(x, y) = 0$ if and only if $x = y$ holds.
\item \textbf{The three point identity}: For any $y, z \in   \text{int}~\text{dom}~h$ and $ x \in \text{dom}~h$,
$$ D_h(x, z)- D_h(x, y) - D_h(y, z) = \langle \nabla h(y)- \nabla h(z), x-y \rangle.$$
\item \textbf{Linear Additivity}: For any $\alpha, \beta \in \mathbb{R}$, and any functions $h_1$ and $h_2$ we have:
$$ D_{\alpha h_1 + \beta h_2}(x, y) = \alpha D_{h_1}(x, y) + \beta D_{h_2}(x, y), $$
for all couple $(x, y) \in ( \text{dom}~ h_1 \cap \text{dom} ~h_2)^2$ such that both $h_1$ and $h_2$ are differentiable at $y$.
\end{enumerate}

Throughout the paper we will focus on the pair of functions $(f, h)$ that satisfies the smooth adaptable condition. Next we present the definition introduced in \cite{bolte2018first}).
\begin{definition}\label{Definition:Smad}(L-smooth adaptable \cite{bolte2018first})
A pair of functions $(f, h)$, such that $h \in \mathcal{G}(S)$, $f: \mathbb{R}^d \rightarrow (-\infty,+\infty]$ is  a proper and lower-semi-continuous function with $\rm{dom}~h \subset \rm{dom}~f$, which is continuously differentiable on $S= \rm{int}~\rm{dom}~h$,
 is called \emph{$L$-smooth adaptable} ($L$-smad) on $S$ if there exists $L > 0$ such that $Lh-g$ and $Lh+g$ are convex on $S$.
\end{definition}
According to \cite[Lemma 2.1]{bolte2018first}, the pair of functions $(f, h)$ is $L$-smad on $S$ if and only if $\|f(x)-f(y)- \langle \nabla f(y), x-y \rangle \| \leq L \, D_h(x, y)$ for any $(x, y) \in \text{int}~\text{dom}~h$.
When $h(x)= \frac{1}{2}\|x\|^2$ and consequently $D_h(x,y)=\frac{1}{2}\|x-y\|^2$, the $L$-smad condition of $f$ would be reduced to Lipschitz gradient continuity: $\|f(x)-f(y)- \langle \nabla f(y), x-y \rangle \| \leq \frac{L}{2}\|x-y\|^2$ for any $(x, y) \in \text{dom}~h$.

Next we introduce the definition of a $\mu$-relative weakly convex function, given in \cite{davis2018stochastic}. This definition extends the definition of weakly convexity \cite{nurminskii1973quasigradient}, which was employed in the analysis of nonconvex optimization methods.

\begin{definition}\label{Definition:WC}
$f$ is called \emph{$\mu$-relative weakly convex} to $h$ on $S$ if there exists $\mu > 0$ such that $ f + \mu h$ is convex on $S$.
\end{definition}
When $f$ is convex, $\mu = 0$. When $(f, h)$ is $L$-smad on $S$, obviously $f$ is $L$-relative weakly convex to $h$. So by default, $\mu \leq L$.

\subsection{Kurdyka--{\L}ojasiewicz Property}
Finally, we introduce the definition of the Kurdyka--{\L}ojasiewicz property proposed in \cite{bolte2014proximal}. We need this property to prove the global convergence of the whole sequences generated by BPGe for solving (\ref{Eq:P}).
\begin{definition}\label{Definition:KL}(Kurdyka--{\L}ojasiewicz property\cite{bolte2014proximal})
Let $f : \mathbb{R}^d \rightarrow (-\infty,+\infty]$ be a proper lower-semi-continuous function.
\begin{enumerate}[(i)]
\item The function $f$ is said to have the \textit{Kurdyka--{\L}ojasiewicz(KL) property} at $\bar{x} \in \rm{dom}~ \partial f := \{ x \in \mathbb{R}^d: \partial f (x) \neq \emptyset \} $ if there exist $\eta \in (0, +\infty]$, a neighborhood $U$ of $\bar{x}$ and a function $\psi:(0, \eta) \rightarrow \mathbb{R}_{+}$
satisfying:
$$\psi(0) = 0, \psi \in \mathcal{C}^1(0,\eta) \text{~and continuous at ~}0, ~\text{for~all}~ s \in (0,\eta): ~ \psi^{\prime}(s) > 0$$
such that for all $ x \in U \cap [f(\bar{x}) < f(x) < f(\bar{x}) + \eta]$, the following inequality holds
$$ \psi^{\prime}(f(x)-f(\bar{x})) \cdot {\rm{dist}}(0,\partial f(x)) \geq 1. $$
\item If $f$ satisfies the KL property at each point of $\rm{dom}~ \partial f$ then $f$ is called a \textit{KL function}.
\end{enumerate}
\end{definition}

\section{Bregman Proximal Gradient Algorithm with Extrapolation (BPGe)}
\label{Sec:3}
Throughout this paper, we focus on the nonconvex problem (\ref{Eq:P}) in Section \ref{Sec:1} with the following assumptions on $f$ and on the kernel generating distance function $h$:
$h \in \mathcal{G}(\mathbb{R}^d)$, $(f, h) ~\text{is} ~L\text{-smad}$ and $f$ is $\mu\text{-weakly convex relative to}~ h$ (see Definition \ref{Definition:Smad} and \ref{Definition:WC}).
And we also make the following general Assumptions~\ref{Assumption:B} and \ref{Assumption:A} as default.

Assumption~\ref{Assumption:B} is a quite standard condition \cite{bolte2018first} to guarantee the existence of the solution to each step of the optimal subproblem of Proximal Gradient (PG) algorithms.
\begin{assumption}\label{Assumption:B}
The function $\Psi$ is supercoercive, that is,
$$ \lim_{\|u\|\rightarrow \infty} \frac{\Psi(u)}{\|u\|} = \infty.$$
\end{assumption}
Assumptions \ref{Assumption:A} is a general assumption used in the analysis of Bregman Proximal-type algorithms  \cite{bauschke2016descent,chen1993convergence}.
\begin{assumption}\label{Assumption:A}
\begin{enumerate}[(i)]
\item $h$ is strictly convex.\label{It1:1}
\item If $\{x^k\}_{k\in \mathbb{N}}$ converges to some $x$ in $\rm{dom}~h$ then $D_h(x,x^k)\rightarrow 0$. \label{It1:2}
\item If $\{x^k\}_{k\in \mathbb{N}},\{y^k\}_{k\in \mathbb{N}}$ defined in $\rm{dom}~ h$ are sequences such that $ y^k \rightarrow x^{\ast} \in \overline{\rm{dom}~ h}$, $\{x^k\}_{k\in \mathbb{N}} $ is bounded, and if $D_h(x^k,y^k) \rightarrow 0$, then $ x^k \rightarrow x^{\ast}$.\label{It1:3}
\end{enumerate}
\end{assumption}

We are now ready to introduce our BPGe algorithm, divided in two parts, Algorithm~1 and Algorithm~2.
Algorithm~1 is the whole framework for solving Problem (\ref{Eq:P}). And Algorithm~2 is a line search step, which is used to search a proper parameter $\beta_k$ at every iteration in Algorithm~1.
Throughout the whole paper, we make the following notations
$$\overline{\lambda} := \sup_{k\in \mathbb{N}} \{ \lambda_k \}, ~~~~\underline{\lambda} := \inf_{k\in \mathbb{N}} \{ \lambda_k \}.$$
By default $0< \underline{\lambda}  \leq \overline{\lambda} < \infty$.

%\begin{algorithm}[htb]
\begin{tcolorbox}
{\textbf{Algorithm 1: BPGe}---Bregman Proximal Gradient algorithm with Extrapolation.}
\hrule
\begin{algorithmic}
\STATE{\textbf{Data:} A function $h$ defined in Definition \ref{Definition:Breg}}\\
\qquad such that $(f, h)$ is $L$-smad holds\\
\qquad and $f$ is $\mu$-weakly convex relative to $h$ on $\mathbb{R}^d$.\\
\quad Error tolerance: \texttt{TOL}.
\STATE{\textbf{Initialization:} $x^0 = x^{-1} \in \text{int}~\text{dom}~h$ and $0< \lambda_k \leq 1/L$.}
\STATE{\textbf{General step:}}\\
\quad \texttt{For} $k =0, 1, 2, \dots$, $k_{max}$ \texttt{repeat} \\
\quad \quad  Take
\begin{equation}
y^{k}  =  x^{k}+\beta_{k}(x^{k}-x^{k-1}),
\end{equation}
\quad \quad where $\beta_{k}$ is searched according to \textbf{Line Search} in Algorithm~2. \\
\quad \quad Then compute
\begin{align}
\hspace*{-0.5cm}
x^{k+1} \in  \arg\min \left\{x: g(x)+ \left\langle x-y^k,\nabla f(y^k)\right\rangle + \frac{1}{\lambda_k}D_h(x, y^k), x \in \mathbb{R}^d \right\}. \label{Eq:Alg2_2}
\end{align}
\quad {\texttt{until}} \texttt{EXIT(TOL)} received.
\end{algorithmic}
\end{tcolorbox}
%\end{algorithm}

%\begin{algorithm}[htb]
\begin{tcolorbox}
{\textbf{Algorithm 2: Line Search} for Algorithm~1 at the $k$-th iteration.}\\
\hrule
\begin{algorithmic}
\STATE{\textbf{Data:} A function $h$ defined in Algorithm~1, fix $0 < \eta < 1$, ${\beta_0} \in [0,1)$, $0< \rho < 1$.}
\STATE{\textbf{Input:} $x^{k-1}, x^{k} \in \text{int}~\text{dom}~h$, $C_{k} = \frac{\lambda_{k}^{-1}}{\lambda_{k}^{-1}+\mu}$.}
\STATE{\textbf{General step:}}\\
\quad$\tilde{\beta} = \beta_0$ \\
\quad \texttt{While} $D_h(x^{k},x^{k} + \tilde{\beta}(x^{k}-x^{k-1})) > \rho \, C_{k} \, D_h(x^{k-1}, x^{k})$ \texttt{do}
     $$ \tilde{\beta} = \eta\tilde{\beta}. $$
\STATE{ \textbf{Return:} Set the feasible step size $\beta_{k} = \tilde{\beta}$ for iteration $k$.}
\end{algorithmic}
\end{tcolorbox}
%\end{algorithm}

We remark that an important point on any iterative process is to define suitable error control techniques. In this paper we consider a quite simple strategy in order to determine the  \texttt{EXIT} conditions. On one hand we fix a maximum number of iterations $k_{max}$ (in most of our tests $5000$ iterations) and  \texttt{EXIT(TOL)=true} if $ \|x^k-x^{k-1}\|/\max\{1, \|x^k\| \} \leq \texttt{TOL}$ (in our tests  $\texttt{TOL} = 10^{-6}$ as in \cite{wen2017linear}). Other option is to check the convergence using the objective function, instead of the solution itself, that is $ \|\Psi(x^k)-\Psi(x^{k-1)}\|/\max\{1, \|\Psi(x^k)\| \} \leq \texttt{TOL}$.

We first verify that (\ref{Eq:Alg2_2}) is well-defined using the following  Proposition~\ref{Proposition:1}. For all $y \in \text{int}~\text{dom}~h$ and stepsize $0< \lambda \leq 1/L$, we define the Bregman proximal gradient mapping as:
\begin{align}
T_{\lambda}(y):= \arg\min \big\{ g(u) + \langle \nabla f(y), u-y \rangle + \lambda^{-1}D_h(u,y): u \in \mathbb{R}^d \big\}.
\end{align}
In Proposition \ref{Proposition:1} we prove that $T_{\lambda}(y)$ is well posed. Thus by Proposition \ref{Proposition:1}, $x^{k+1} \in T_{\lambda_k}(x^{k})$, and fixing $ \inf \{\lambda_k\} > 0$, then Step (\ref{Eq:Alg2_2}) in BPGe algorithm is well-defined.

\begin{proposition}\label{Proposition:1}
Suppose that Assumption~\ref{Assumption:B} holds, let $y \in \rm{int}~\rm{dom}~h$ and $0< \lambda \leq 1/L$. Then, the set $T_{\lambda}(y)$ is a nonempty and compact set.
\end{proposition}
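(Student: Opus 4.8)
The goal is to show that the minimization problem defining $T_\lambda(y)$ has a nonempty compact set of solutions. The plan is to apply the classical Weierstrass-type existence argument: I would first argue that the objective function
$$\varphi(u) := g(u) + \langle \nabla f(y), u-y \rangle + \lambda^{-1}D_h(u,y)$$
is proper and lower-semi-continuous, then show that it is coercive (indeed supercoercive), and finally invoke the standard result that a proper l.s.c.\ coercive function on $\mathbb{R}^d$ attains its infimum on a nonempty compact set.

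First I would establish properness and lower-semicontinuity. The function $g$ is proper l.s.c.\ convex by the standing hypotheses on problem~(\ref{Eq:P}); the linear term $\langle \nabla f(y), u-y\rangle$ is continuous; and $D_h(u,y) = h(u) - h(y) - \langle \nabla h(y), u-y\rangle$ is l.s.c.\ in $u$ since $h$ is l.s.c.\ and the remaining terms are continuous in $u$ (here $y \in \operatorname{int}\operatorname{dom} h$ is fixed, so $\nabla h(y)$ is well-defined). Hence $\varphi$ is proper l.s.c. The second and main step is to rewrite $\varphi$ so as to expose the supercoercive part $\Psi$. Using the $L$-smad property (equivalently $Lh - f$ convex) and the bound $0 < \lambda \le 1/L$, I would split $\lambda^{-1} D_h(u,y)$ and combine it with the linear term to produce $f(u)$ plus a nonnegative remainder. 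Concretely, since $(f,h)$ is $L$-smad, Lemma~2.1 of \cite{bolte2018first} gives
$$f(u) \le f(y) + \langle \nabla f(y), u-y\rangle + L\, D_h(u,y),$$
so that
$$\langle \nabla f(y), u-y\rangle + \lambda^{-1} D_h(u,y) \ge f(u) - f(y) + (\lambda^{-1}-L)\, D_h(u,y) \ge f(u) - f(y),$$
because $\lambda^{-1}-L \ge 0$ and $D_h \ge 0$. Adding $g(u)$ yields
$$\varphi(u) \ge \Psi(u) - f(y),$$
with $f(y)$ a constant. By Assumption~\ref{Assumption:B}, $\Psi$ is supercoercive, whence $\varphi(u)/\|u\| \to \infty$ as $\|u\| \to \infty$; in particular $\varphi$ is coercive and bounded below.

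Having both properties, I would conclude by the Weierstrass theorem: a proper l.s.c.\ coercive function on $\mathbb{R}^d$ attains its minimum, so $T_\lambda(y) \ne \emptyset$. For compactness, boundedness of $T_\lambda(y)$ follows from coercivity (every minimizing point lies in a bounded sublevel set), and closedness follows from lower-semicontinuity of $\varphi$, so $T_\lambda(y)$ is closed and bounded, hence compact. The step I expect to require the most care is the coercivity reduction: one must use the $L$-smad inequality in the right direction and exploit $\lambda^{-1} \ge L$ to absorb the Bregman term into $f$ without leaving an uncontrolled negative contribution, after which supercoercivity of $\Psi$ does the remaining work. The other steps are routine applications of standard variational-analysis facts.
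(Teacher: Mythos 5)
Your proposal is correct and follows essentially the same route as the paper's proof: the same use of the $L$-smad inequality together with $\lambda^{-1}\geq L$ to bound the objective below by $\Psi(u)$ (up to the constant $f(y)$), followed by supercoercivity from Assumption~\ref{Assumption:B} and the Weierstrass existence theorem to get a nonempty compact solution set. The extra detail you give on properness and lower-semicontinuity is a welcome addition but does not change the argument.
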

\begin{proof}
Fix any $y \in \text{int}~\text{dom}~h$ and $0< \lambda \leq 1/L$. For any $u \in \mathbb{R}^d$, we define
$$\Psi_h(u) = g(u)+ f(y) + \big\langle u-y ,\nabla f(y)\big\rangle +\lambda^{-1} D_h(u,y), $$
so that $T_{\lambda}(y) = \arg\min_{u\in \mathbb{R}^d} \Psi_h(u),$
It can also be represented as
\begin{align*}
\Psi_h(u) &= \Psi(u) -f(u)+f(y)+ \big\langle u-y ,\nabla f(y)\big\rangle +\lambda^{-1} D_h(u,y) \\
& \geq  \Psi(u) + L \, D_h(u,y)-\bigg[f(u)-f(y)- \big\langle u-y ,\nabla f(y)\big\rangle \bigg]\\
& \geq \Psi(u).
\end{align*}
where the second inequality is obtained by taking into account $\lambda^{-1} \geq L$ and in the last inequality that $(f,h)$ is $L$-smooth adaptable.
According to Assumption~\ref{Assumption:B}, i.e. $\lim_{\|u\| \rightarrow \infty} \Psi(u) = \infty$, there is
$$ \lim_{\|u\| \rightarrow \infty}\Psi_h(u) \geq \lim_{\|u\| \rightarrow \infty}\Psi(u) =\infty.$$
 Since $\Psi_h$ is also proper and lower-semi-continuous, invoking the modern form of Weierstrass’ theorem (see, e.g., \cite[Theorem 1.9, page 11]{rockafellar2015convex}), it follows that the value $\inf_{\mathbb{R}^d} \Psi_h$ is finite, and the set $\arg\min_{u \in \mathbb{R}^d} \Psi_h(u) \equiv T_{\lambda}(y)$ is nonempty and compact.
\end{proof}

Secondly, we add an extrapolation step to the BPGe algorithm to choose a suitable $\beta_k$ at each iteration step through the line search Algorithm~2. On this step it is hard to guarantee directly the decrease of function value $\Psi(x^k)$. Therefore, we focus on guaranteeing sufficient decrease of the Lyapunov sequences defined in Section~\ref{Sec:4} in the convergence analysis. However, it still requires an extra  condition $D_h(x^{k}, x^{k} + \beta_{k}(x^{k}-x^{k-1})) \leq \rho \, C_k \,  D_h(x^{k-1}, x^{k})$.
When $h=\frac{1}{2}\|x\|^2$, BPGe is reduced to the PGe algorithm \cite{wen2017linear} and this condition is easily satisfied by setting $0 \leq \beta_k \leq \sqrt{\rho \frac{L}{L+\mu}}$. But when $h$ is more general and complex, computing the threshold of $\beta_k$ directly may be hard and expensive. So,  we try to reach this condition by a line search method introduced in Algorithm~2. Thus, our next step is to  verify that Algorithm~2 is well-defined, as the following proposition \ref{Lemma:FT} shows.

\begin{proposition}\label{Lemma:FT}(Finite termination of Algorithm~2).
Consider Algorithm~1 and fix $k \in \mathbb{N}$. Let $0 < \eta < 1, 0< \rho < 1$, $\tilde{\beta} \in [0,1)$, $C_k = \frac{\lambda_{k}^{-1}}{\lambda_{k}^{-1}+\mu} > 0$. Then, there exists $J \in \mathbb{N}$ such that $\beta_{k} := \eta^j\tilde{\beta}$  satisfies $$D_h(x^{k},x^{k} + {\beta_k}(x^{k}-x^{k-1})) \leq \rho \, C_k \,  D_h(x^{k-1}, x^{k})$$ for any $j \geq J$.
\end{proposition}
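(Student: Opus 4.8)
The plan is to exploit that the left-hand side of the line-search test, viewed as a function of the candidate step, is continuous and vanishes as the step goes to zero, while the right-hand side is a fixed nonnegative number. Concretely, I would introduce
$$\phi(\beta) := D_h\big(x^k,\, x^k + \beta(x^k - x^{k-1})\big) = h(x^k) - h(y(\beta)) - \langle \nabla h(y(\beta)),\, x^k - y(\beta)\rangle,$$
where $y(\beta) := x^k + \beta(x^k - x^{k-1})$, and observe that the acceptance condition tested at the $j$-th trial is precisely $\phi(\eta^j \tilde{\beta}) \le \rho\, C_k\, D_h(x^{k-1}, x^k)$. The quantity on the right is independent of $j$, so it suffices to show that the left-hand side can be made arbitrarily small by taking $j$ large.

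First I would verify that $\phi$ is well defined and continuous. Because the standing assumptions impose $h \in \mathcal{G}(\mathbb{R}^d)$, we have $\mathrm{dom}\,h = \mathbb{R}^d$ and $h$ is $\mathcal{C}^1$ on all of $\mathbb{R}^d$; hence $y(\beta) \in \mathrm{int}\,\mathrm{dom}\,h$ for every $\beta$, so $D_h(x^k, y(\beta))$ is defined, and both $h$ and $\nabla h$ are continuous. Since $\beta \mapsto y(\beta)$ is affine, $\phi$ is a continuous function of $\beta$. Evaluating at $\beta = 0$ gives $y(0) = x^k$ and $\phi(0) = D_h(x^k, x^k) = 0$.

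Next, since $0 < \eta < 1$ and $\tilde{\beta} \in [0,1)$, the trial steps satisfy $\eta^j \tilde{\beta} \to 0$ as $j \to \infty$, and continuity of $\phi$ yields $\phi(\eta^j \tilde{\beta}) \to \phi(0) = 0$. I would then split into two cases according to the sign of $D_h(x^{k-1}, x^k)$. If $D_h(x^{k-1}, x^k) > 0$, the right-hand side $\rho\, C_k\, D_h(x^{k-1}, x^k)$ is strictly positive (recall $\rho, C_k > 0$), so the convergence $\phi(\eta^j \tilde{\beta}) \to 0$ furnishes an index $J$ with $\phi(\eta^j \tilde{\beta}) \le \rho\, C_k\, D_h(x^{k-1}, x^k)$ for all $j \ge J$, which is the claim. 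If instead $D_h(x^{k-1}, x^k) = 0$, then strict convexity of $h$ (Assumption~\ref{Assumption:A}(\ref{It1:1})) forces $x^{k-1} = x^k$; consequently $y(\beta) = x^k$ for every $\beta$ and $\phi \equiv 0$, so the test holds trivially for all $j$ and we may take $J = 0$.

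I do not expect a genuine obstacle here: the argument reduces to continuity of the Bregman distance in its second argument together with the identity $\phi(0) = 0$. The only points requiring care are ensuring that $y(\beta)$ stays in $\mathrm{int}\,\mathrm{dom}\,h$ so that $\phi$ is defined---automatic because $h \in \mathcal{G}(\mathbb{R}^d)$ has full domain---and disposing of the degenerate case $x^{k-1} = x^k$, for which strict convexity is exactly the hypothesis that collapses both sides of the inequality to zero.
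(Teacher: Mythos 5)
Your argument is correct and is essentially the paper's proof: both hinge on the facts that $D_h\bigl(x^k, x^k+\eta^j\tilde{\beta}(x^k-x^{k-1})\bigr)\to 0$ as $j\to\infty$ while strict convexity of $h$ makes the right-hand side strictly positive when $x^k\neq x^{k-1}$, with the degenerate case $x^k=x^{k-1}$ handled trivially. The only cosmetic differences are that the paper phrases it as a contradiction and justifies the limit via Assumption~\ref{Assumption:A}(\ref{It1:2}) rather than via direct continuity of $h$ and $\nabla h$; both justifications are valid here.
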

\begin{proof}
This result is proved by contradiction.
Suppose that $$D_h(x^{k},x^{k} + \eta^j\tilde{\beta}(x^{k}-x^{k-1})) > \rho \,  C_k \, D_h(x^{k-1}, x^{k})$$ holds for any $j \in \mathbb{N}$.

When $x^k = x^{k-1}$, Algorithm~2 terminates with $\beta_k = \tilde{\beta}$ directly.

When $x^k \neq x^{k-1}$, since $$\| x^{k}-(x^{k} + \tilde{\beta}(x^{k}-x^{k-1}))\| =  \eta^j\tilde{\beta}\|x^{k}-x^{k-1}\|  \rightarrow 0, \quad  j\rightarrow \infty,$$ according to Assumption~\ref{Assumption:A}(\ref{It1:2}), $D_h\big(x^{k},x^{k} + \eta^j\tilde{\beta}(x^{k}-x^{k-1})\big) \rightarrow 0$.
Thus for any $\varepsilon >0$, there exist a number $J \in \mathbb{N}$ such that
$$ D_h(x^{k},x^{k} + \eta^j\tilde{\beta}(x^{k}-x^{k-1})) < \varepsilon, ~\text{for all}~ j \geq J.$$
Since $x^{k} \neq x^{k-1}$, and due to the strictly convexity of $h$ in Assumption~\ref{Assumption:A}(\ref{It1:1}), $$D_h(x^{k-1},x^{k}) > 0.$$
If we set $ \varepsilon = \frac{1}{2}\rho \, C_k \, D_h(x^{k-1},x^{k})$, then
$$ \rho \, C_k \, D_h(x^{k-1},x^{k}) < D_h\bigg(x^{k},x^{k} + \eta^j\tilde{\beta}(x^{k}-x^{k-1})\bigg) < \frac{1}{2} \, \rho \, C_k \, D_h(x^{k-1}, x^{k}),$$
for $j \geq J$, which is a contradiction.
\end{proof}

\section{Convergence Analysis of BPGe}
\label{Sec:4}
In this section we provide the main convergence results of the BPGe algorithm.
First of all, following the analysis of Remark 4.1(ii) in \cite{bolte2018first}, we obtain the following Lemma \ref{Lemma:1}.
We find that after adding an extrapolation term, it is hard to justify monotonicity of the objective function $\Psi$ directly. But for a special auxiliary sequence, defined by
$$ H_{k,M} := \Psi(x^{k}) + M D_h(x^{k-1},x^{k}), \quad M>0, \quad \forall k \in \mathbb{N}$$
the monotone property will be presented in our settings.

\begin{lemma} \label{Lemma:1}
For any $x \in \rm{int}~\rm{dom}~h$, and let be a sequence $\{x^k\}_{k\in \mathbb{N}}$ produced by BPGe, then
\begin{enumerate}[(i)]
	\item For any  $ k \in \mathbb{N}$, we have
	\begin{align}\label{Eq:Lemma_1}
	\Psi(x^{k+1}) - \Psi(x) \leq  (\lambda_k^{-1}+ \mu) \, D_h(x,y^{k})- \lambda_k^{-1}D_h(x,x^{k+1}) - (\lambda_k^{-1}- L) \, D_h(x^{k+1},y^k).
	\end{align}
	\item For any  $ k \in \mathbb{N}$, we have
	\begin{align}
	H_{k+1,M} - H_{k,M} \leq (M - \lambda_k^{-1})\, D_h(x^k,x^{k+1})- \left(M - \rho \lambda_k^{-1} \right) \, D_h(x^{k-1},x^{k}). \label{Eq:Lem2_1}
	\end{align}
     Moreover, assuming there exists some $M$ such that $\rho \, \underline{\lambda}^{-1} \leq M \leq \overline{\lambda}^{-1}$, then the sequence $\{H_{k,M}\}$ is nonincreasing and convergent for the fixed $M$.
\end{enumerate}
\end{lemma}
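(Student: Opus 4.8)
The plan for part (i) is to combine the optimality of the Bregman proximal update with the two structural hypotheses on $(f,h)$. Since $x^{k+1}$ minimizes the subproblem defining $T_{\lambda_k}(y^k)$ and $h$ is differentiable on $\text{int dom } h$, the first-order optimality condition produces a subgradient $\xi \in \partial g(x^{k+1})$ with $\xi + \nabla f(y^k) + \lambda_k^{-1}(\nabla h(x^{k+1}) - \nabla h(y^k)) = 0$. Convexity of $g$ then gives, for the arbitrary test point $x$, the bound $g(x^{k+1}) \leq g(x) + \langle \nabla f(y^k) + \lambda_k^{-1}(\nabla h(x^{k+1}) - \nabla h(y^k)),\, x - x^{k+1}\rangle$. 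I would pair this with two further inequalities: the $L$-smad upper estimate $f(x^{k+1}) \leq f(y^k) + \langle \nabla f(y^k),\, x^{k+1} - y^k\rangle + L\, D_h(x^{k+1}, y^k)$ coming from the characterization after Definition~\ref{Definition:Smad}, and the $\mu$-relative weak convexity lower estimate $f(y^k) + \langle \nabla f(y^k),\, x - y^k\rangle \leq f(x) + \mu\, D_h(x, y^k)$, obtained by writing the convexity inequality for $f + \mu h$ at $y^k$ (Definition~\ref{Definition:WC}).

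Adding the three inequalities, the two $\nabla f(y^k)$ inner products collapse to $\langle \nabla f(y^k),\, x - y^k\rangle$ because the $x^{k+1}$ contributions cancel, and subtracting $\Psi(x) = f(x) + g(x)$ leaves $\Psi(x^{k+1}) - \Psi(x) \leq \mu\, D_h(x, y^k) + L\, D_h(x^{k+1}, y^k) + \lambda_k^{-1}\langle \nabla h(x^{k+1}) - \nabla h(y^k),\, x - x^{k+1}\rangle$. The surviving inner product is exactly the right-hand side of the three point identity applied with $y = x^{k+1}$ and $z = y^k$, so it equals $\lambda_k^{-1}\big[D_h(x, y^k) - D_h(x, x^{k+1}) - D_h(x^{k+1}, y^k)\big]$. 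Substituting and regrouping the $D_h(x, y^k)$ and $D_h(x^{k+1}, y^k)$ terms yields precisely \eqref{Eq:Lemma_1}.

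For part (ii), I would first telescope the definition of $H_{k,M}$ into $H_{k+1,M} - H_{k,M} = \Psi(x^{k+1}) - \Psi(x^k) + M\big[D_h(x^k, x^{k+1}) - D_h(x^{k-1}, x^k)\big]$, and then invoke part (i) with the test point $x = x^k$ to control $\Psi(x^{k+1}) - \Psi(x^k)$. The decisive step is the line-search guarantee $D_h(x^k, y^k) \leq \rho\, C_k\, D_h(x^{k-1}, x^k)$ enforced by Algorithm~2: since $C_k = \lambda_k^{-1}/(\lambda_k^{-1} + \mu)$, the prefactor $(\lambda_k^{-1} + \mu)$ cancels exactly and the offending term obeys $(\lambda_k^{-1} + \mu)\, D_h(x^k, y^k) \leq \rho\, \lambda_k^{-1}\, D_h(x^{k-1}, x^k)$. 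Because $\lambda_k \leq 1/L$ forces $\lambda_k^{-1} - L \geq 0$ and $D_h \geq 0$, the term $-(\lambda_k^{-1} - L)\, D_h(x^{k+1}, y^k)$ is nonpositive and can simply be discarded; collecting the remaining $D_h(x^k, x^{k+1})$ and $D_h(x^{k-1}, x^k)$ contributions gives \eqref{Eq:Lem2_1}.

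For the monotonicity and convergence claim, I would track the sign of each coefficient using the hypothesis $\rho\, \underline{\lambda}^{-1} \leq M \leq \overline{\lambda}^{-1}$. Reading $\overline{\lambda}^{-1} = \inf_k \lambda_k^{-1}$ and $\underline{\lambda}^{-1} = \sup_k \lambda_k^{-1}$, the bound $M \leq \overline{\lambda}^{-1}$ yields $M - \lambda_k^{-1} \leq 0$ for every $k$, while $M \geq \rho\, \underline{\lambda}^{-1}$ yields $M - \rho\, \lambda_k^{-1} \geq 0$; hence both terms on the right of \eqref{Eq:Lem2_1} are nonpositive and $H_{k+1,M} \leq H_{k,M}$. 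Finally, since $M \geq \rho\, \underline{\lambda}^{-1} > 0$ and $D_h \geq 0$, we have $H_{k,M} \geq \Psi(x^k) \geq \Psi^{\ast} > -\infty$, so the nonincreasing sequence is bounded below and therefore converges. The only genuinely delicate point is the bookkeeping in part (ii): recognizing that the line-search constant $C_k$ is engineered so that the $(\lambda_k^{-1} + \mu)$ factor cancels cleanly, converting the $D_h(x^k, y^k)$ term into a $\rho\, \lambda_k^{-1}\, D_h(x^{k-1}, x^k)$ term that can be balanced against $M\, D_h(x^{k-1}, x^k)$; the remainder is routine sign-tracking of the stepsize parameters.
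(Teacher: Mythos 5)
Your proposal is correct and follows essentially the same route as the paper: first-order optimality of the subproblem plus convexity of $g$, the three point identity, the $L$-smad and $\mu$-relative weak convexity estimates for part (i), and then the specialization $x = x^k$ combined with the line-search guarantee (where $C_k$ cancels the $(\lambda_k^{-1}+\mu)$ factor) and sign-tracking of $M - \lambda_k^{-1}$ and $M - \rho\lambda_k^{-1}$ for part (ii). The only cosmetic difference is that the paper treats the case $x^k = x^{k-1}$ separately, which your argument covers implicitly since $y^k = x^k$ makes the line-search inequality hold trivially there.
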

\begin{proof}
\noindent(i) According to the first order condition of (\ref{Eq:Alg2_2}), we get
$$0\in \partial g(x^{k+1})+ \nabla f(y^k) +\lambda_k^{-1}\big(\nabla h(x^{k+1})-\nabla h(y^k)\big), \quad \forall k \in \mathbb{N}.$$
Combining with the convexity of $g$, there is
\begin{align*}
g(x) - g(x^{k+1}) \geq \bigg\langle -\nabla f(y^k) -\lambda_k^{-1}\big(\nabla h(x^{k+1})-\nabla h(y^k)\big), \, x-x^{k+1} \bigg\rangle, \quad \forall k \in \mathbb{N}.
\end{align*}
Together with the three point identity of Bregman distance
$$\lambda_k^{-1}\big\langle \nabla h(x^{k+1})- \nabla h(y^k), \, x-x^{k+1}  \big\rangle =
\lambda_k^{-1} \big(D_h(x,y^{k})-D_h(x,x^{k+1})-D_h(x^{k+1},y^k)\big)$$
we have that
\begin{align}
&g(x) - g(x^{k+1}) +f(x) -f(x^{k+1}) \nonumber \\
& \quad \geq  f(x) -f(x^{k+1}) - \bigg\langle \nabla f(y^k), \, x - x^{k+1} \bigg\rangle \nonumber \\ & \qquad \qquad  - \lambda_k^{-1} \bigg(D_h(x,y^{k})-D_h(x,x^{k+1})-D_h(x^{k+1},y^k)\bigg), \quad \forall k \in \mathbb{N}.
\end{align}
If we take the $\mu$-relative weakly convex property and $L$-smad property of $(f,h)$ (see Definitions~\ref{Definition:Smad} and \ref{Definition:WC}),
\begin{align}
&f(x) -f(x^{k+1}) - \big\langle \nabla f(y^k), \, x - x^{k+1} \big\rangle \nonumber \\
& \quad = f(x)-f(y^k)-\bigg\langle \nabla f(y^k), \, x -y^k \bigg\rangle + f(y^k) -f(x^{k+1}) - \bigg\langle \nabla f(y^k), \, y^k - x^{k+1}  \bigg\rangle \nonumber \\
& \quad \geq -\mu \, D_h(x,y^k) - L \, D_h(x^{k+1},y^k),  \qquad \forall k \in \mathbb{N}.
\end{align}
Thus
\begin{align*}
\Psi(x^{k+1}) - \Psi(x) \leq  (\lambda_k^{-1}+ \mu) \, D_h(x,y^{k})- \lambda_k^{-1}D_h(x,x^{k+1}) - (\lambda_k^{-1}-L) \, D_h(x^{k+1},y^k) .
\end{align*}
\vspace{12pt}

\noindent(ii)  For any $ k \in \mathbb{N}$, taking $x=x^k$ into (\ref{Eq:Lemma_1}), together with $L \leq \lambda_k^{-1}, D_h(x^{k+1},y^k) \geq 0$ we get
\begin{align*}
 \Psi(x^{k+1}) - \Psi(x^k) \leq (\lambda_k^{-1}+ \mu) \, D_h(x^k,y^{k})- \lambda_k^{-1}D_h(x^k,x^{k+1}).
\end{align*}
If $ x^k = x^{k-1}$, we get $y^k = x^k$, thus $ D_h(x^k,y^k) = D_h(x^{k-1},x^k) =0$ and
\begin{align}
\Psi(x^{k+1}) + \lambda_k^{-1}D_h(x^k,x^{k+1}) \leq \Psi(x^k) = \Psi(x^k)+ (\lambda_k^{-1}+ \mu) \, \rho \, C_k \, D_h(x^{k-1}, x^k). \label{Eq:Lem2_3}
\end{align}
If $x^k \neq x^{k-1}$, according to Algorithm~2, we have $ D_h(x^k,y^k) \leq \rho \, C_k \, D_h(x^{k-1},x^k)$, thus
\begin{align}
\Psi(x^{k+1}) + \lambda_k^{-1}D_h(x^k,x^{k+1}) \leq \Psi(x^k)+ (\lambda_k^{-1}+ \mu) \, \rho \, C_k \, D_h(x^{k-1}, x^k).
\end{align}
Combining these two cases, we obtain
$$ \Psi(x^{k+1}) + \lambda_k^{-1}D_h(x^k,x^{k+1}) \leq \Psi(x^k)+ (\lambda_k^{-1}+ \mu) \, \rho \, C_k \, D_h(x^{k-1}, x^k), \, \, \forall k \in \mathbb{N}.$$

From the definition of $H_{k,M}$, we see that
\begin{align*}
H_{k+1,M} - H_{k,M} \leq (M - \lambda_k^{-1})D_h(x^k,x^{k+1})- \left(M - \rho \lambda_k^{-1} \right)D_h(x^{k-1},x^{k}), \, \, \forall k \in \mathbb{N}.
\end{align*}
Furthermore, assuming there exists some $M$ such that $$ \rho \, \lambda_k^{-1} \leq  \rho \, \underline{\lambda}^{-1} \leq M \leq \overline{\lambda}^{-1} \leq \lambda_k^{-1},$$ and fixing one of such values of $M$, we find that
$$ H_{k+1,M} - H_{k,M} \leq 0, \quad \forall k \in \mathbb{N}, $$
that is, $\{H_{k,M}\}_{k\in \mathbb{N}}$ is nonincreasing for the fixed value of $M$.

Recall that $H_{k,M} \geq \inf \Psi > -\infty$ and $H_{k,M}$ is nonincreasing. This implies that $\{H_{k,M}\}$ is convergent for some fixed $M$.
\end{proof}

The next corollary is an obvious result based on Lemma \ref{Lemma:1}. We analyze the boundness of the sequences produced by BPGe algorithm. Since $H_{k,M}$ is nonincreasing according to Lemma~\ref{Lemma:1}(ii), it is easy to verify that the sequence $\{x^k \}_{k\in \mathbb{N}}$ generated by BPGe is bounded according to Assumption~\ref{Assumption:B}. The boundness would act as a tool in the following analysis, so we present this result as the auxiliary Corollary~\ref{Corollary:1}.
\begin{corollary}\label{Corollary:1}
Assume there exists some $M$ such that $\rho \, \underline{\lambda}^{-1} \leq M \leq \overline{\lambda}^{-1}$, then the sequence $\{x^k \}_{k\in \mathbb{N}}$ generated by BPGe is bounded.
\end{corollary}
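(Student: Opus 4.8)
The plan is to chain together the monotonicity of the Lyapunov sequence established in Lemma~\ref{Lemma:1}(ii) with the supercoercivity of $\Psi$ from Assumption~\ref{Assumption:B}. The guiding observation is that boundedness of $\{x^k\}$ will follow immediately once we know the objective values $\Psi(x^k)$ stay below a fixed constant, because supercoercivity forces $\Psi$ to have bounded sublevel sets.

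First I would invoke the stated hypothesis $\rho\,\underline{\lambda}^{-1} \le M \le \overline{\lambda}^{-1}$, under which Lemma~\ref{Lemma:1}(ii) guarantees that $\{H_{k,M}\}_{k\in\mathbb{N}}$ is nonincreasing; in particular $H_{k,M} \le H_{0,M}$ for every $k$. Writing out the definition $H_{k,M} = \Psi(x^k) + M\,D_h(x^{k-1},x^k)$ and using that $M>0$ together with the nonnegativity of the Bregman distance (property (i) listed in Section~\ref{Sec:2}), I obtain
\[
\Psi(x^k) \;\le\; \Psi(x^k) + M\,D_h(x^{k-1},x^k) \;=\; H_{k,M} \;\le\; H_{0,M},
\qquad \forall k\in\mathbb{N}.
\]
Thus the whole sequence of objective values $\{\Psi(x^k)\}$ is bounded above by the fixed constant $H_{0,M}$.

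Next I would translate this uniform upper bound on the objective into boundedness of the iterates. Assumption~\ref{Assumption:B} states that $\Psi$ is supercoercive, i.e. $\Psi(u)/\|u\| \to \infty$ as $\|u\|\to\infty$; this in particular implies $\Psi(u)\to\infty$ as $\|u\|\to\infty$, so every sublevel set $\{u : \Psi(u) \le c\}$ is bounded. Since $\Psi(x^k) \le H_{0,M}$ for all $k$, the entire sequence $\{x^k\}$ lies in the sublevel set $\{u : \Psi(u) \le H_{0,M}\}$, which is bounded, and hence $\{x^k\}$ is bounded.

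I expect no serious obstacle here: the argument is essentially a one-line consequence of Lemma~\ref{Lemma:1}(ii), once one notices that dropping the nonnegative Bregman term from $H_{k,M}$ leaves a clean upper bound on $\Psi(x^k)$. The only point requiring a little care is the passage from supercoercivity to bounded sublevel sets, but this is standard and follows directly from the definition, which is precisely why the authors label this an \emph{obvious} corollary.
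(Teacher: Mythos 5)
Your argument is correct and is precisely the one the paper intends: it sketches this corollary in the surrounding text by noting that the monotonicity of $H_{k,M}$ from Lemma~\ref{Lemma:1}(ii) combined with Assumption~\ref{Assumption:B} yields boundedness, which is exactly your chain $\Psi(x^k)\le H_{k,M}\le H_{0,M}$ followed by the bounded-sublevel-set consequence of supercoercivity. No gaps; the proposal matches the paper's approach.
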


If the stepsize $\lambda_k$ and parameter $\rho$ in Algorithm~2 satisfy $\rho < \overline{\lambda}^{-1}/\underline{\lambda}^{-1} = \underline{\lambda}/\overline{\lambda}$,
then we could get sufficient decrease of the auxiliary sequence  $\{H_{k,M}\}_{k\in \mathbb{N}}$ for the fixed $M$ given in Lemma \ref{Lemma:1}.
As a consequence, we can bound the sum of Bregman distance between two iteration points generated by BPGe.
Moreover, adding stronger assumptions than Assumption \ref{Assumption:A} on the kernel generating distance $h$, such as strong convexity, we could get that $\lim_{k\rightarrow \infty} \|x^k- x^{k-1}\| =0$ for the sequence  $\{x^k \}_{k\in \mathbb{N}}$ in $\mathbb{R}^d$ by BPGe. In this paper, we just consider the set of weaker blanket Assumptions~\ref{Assumption:B} and \ref{Assumption:A}, that permit us to prove that any limit point of the sequence $\{x^k\}_{k\in \mathbb{N}}$ generated by BPGe, if exists, is a stationary point of the objective function $\Psi$.

Assume that  $\{x^k \}_{k\in \mathbb{N}}$ is generated from a starting point $x^0$.
The set of all limit points of $\{x^k \}_{k\in \mathbb{N}}$ is denoted by
$$ \omega(x^0):= \{ \overline{x}: \text{~an increasing sequence of integers~} \{k_i\}_{i \in \mathbb{N}} \text{~such that~} x^{k_i} \rightarrow \overline{x}  \text{~as ~} i \rightarrow \infty \}.$$
The next technical lemma shows, among other results, that for any $x^0 \in \mathbb{R}^d$, $\omega(x^0) \subseteq \text{crit}~\Psi $ holds.

\begin{lemma}\label{Theorem:1}
Suppose $\rho < \underline{\lambda}/\overline{\lambda}$ and let $\{x^k\}_{k\in \mathbb{N}} $ be a sequence generated from $x^0$ by BPGe. Then the following statements hold:
\begin{enumerate}[(i)]
\item $\sum_{k=0} ^{\infty} D_h(x^{k-1},x^k) < \infty $ and $\lim_{k\rightarrow \infty} D_h(x^{k-1},x^k) = 0$.
\item Any limit point of $\{x^k\}_{k\in \mathbb{N}} $ is a critical point of $\Psi$ ($\omega(x^0) \subseteq \rm{crit}~\Psi $).
\item $\zeta := \lim_{k\rightarrow \infty} \Psi(x^k)$ exists and $\Psi \equiv \zeta$ on $\omega(x^0)$.
\end{enumerate}
\end{lemma}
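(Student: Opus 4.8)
The plan is to run everything off the one-step estimate (\ref{Eq:Lem2_1}) of Lemma~\ref{Lemma:1}(ii) together with the boundedness of $\{x^k\}$ from Corollary~\ref{Corollary:1}, exploiting that the hypothesis $\rho<\underline{\lambda}/\overline{\lambda}$ is exactly the strict inequality $\rho\,\underline{\lambda}^{-1}<\overline{\lambda}^{-1}$. For (i), I would fix a constant $M$ with $\rho\,\underline{\lambda}^{-1}<M\le\overline{\lambda}^{-1}$ (such $M$ exists precisely because of the strict gap) and set $\delta:=M-\rho\,\underline{\lambda}^{-1}>0$. Since $\lambda_k^{-1}\ge\overline{\lambda}^{-1}\ge M$, the first coefficient in (\ref{Eq:Lem2_1}) obeys $M-\lambda_k^{-1}\le0$, while $\rho\lambda_k^{-1}\le\rho\,\underline{\lambda}^{-1}$ gives $M-\rho\lambda_k^{-1}\ge\delta$; hence (\ref{Eq:Lem2_1}) collapses to $H_{k+1,M}-H_{k,M}\le-\delta\,D_h(x^{k-1},x^k)$. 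Telescoping from $k=0$ to $N$ and using $H_{N+1,M}\ge\Psi^{\ast}$ yields $\delta\sum_{k=0}^{N}D_h(x^{k-1},x^k)\le H_{0,M}-\Psi^{\ast}<\infty$, and letting $N\to\infty$ gives both summability and $D_h(x^{k-1},x^k)\to0$.

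For (ii), fix $\bar{x}\in\omega(x^0)$ with $x^{k_i}\to\bar{x}$. The first-order optimality condition of (\ref{Eq:Alg2_2}) recorded in the proof of Lemma~\ref{Lemma:1}(i) shows that
$$\xi^{k+1}:=\nabla f(x^{k+1})-\nabla f(y^k)-\lambda_k^{-1}\big(\nabla h(x^{k+1})-\nabla h(y^k)\big)\in\partial\Psi(x^{k+1}),$$
so it suffices to pass to the limit along $k_i$ and invoke closedness of the graph of $\partial\Psi$. Since $\{x^k\}$ is bounded and $\beta_{k_i}\in[0,1)$, the sequences $\{x^{k_i+1}\}$ and $\{y^{k_i}\}$ are bounded, so after passing to a further subsequence I may assume $x^{k_i+1}\to z$ and $y^{k_i}\to w$. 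By (i), $D_h(x^{k_i},x^{k_i+1})\to0$, and the line-search condition gives $D_h(x^{k_i},y^{k_i})\le\rho\,C_{k_i}\,D_h(x^{k_i-1},x^{k_i})\to0$; applying Assumption~\ref{Assumption:A}(\ref{It1:3}) with the bounded first argument $x^{k_i}$ forces $x^{k_i}\to z$ and $x^{k_i}\to w$, whence $z=w=\bar{x}$. Because $h\in\mathcal{G}(\mathbb{R}^d)$ makes $\nabla f,\nabla h$ continuous on all of $\mathbb{R}^d$ and $\lambda_{k_i}^{-1}$ stays bounded, we get $\xi^{k_i+1}\to0$. To apply the closedness theorem I also need $\Psi(x^{k_i+1})\to\Psi(\bar{x})$: continuity of $f$ settles the smooth part, while for $g$ I combine lower semicontinuity, $\liminf_i g(x^{k_i+1})\ge g(\bar{x})$, with the reverse estimate obtained by testing the minimality of $x^{k_i+1}$ in (\ref{Eq:Alg2_2}) against the competitor $u=\bar{x}$; every inner-product and Bregman term there tends to $0$ (using Assumption~\ref{Assumption:A}(\ref{It1:2}) and continuity of $D_h$), giving $\limsup_i g(x^{k_i+1})\le g(\bar{x})$. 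Thus $\Psi(x^{k_i+1})\to\Psi(\bar{x})$ and $0=\lim_i\xi^{k_i+1}\in\partial\Psi(\bar{x})$, i.e. $\bar{x}\in\mathrm{crit}\,\Psi$.

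Part (iii) is then immediate. Lemma~\ref{Lemma:1}(ii) gives $H_{k,M}\to\zeta$ for the fixed $M$, and since $D_h(x^{k-1},x^k)\to0$ by (i), we obtain $\Psi(x^k)=H_{k,M}-M\,D_h(x^{k-1},x^k)\to\zeta$, so $\lim_k\Psi(x^k)$ exists and equals $\zeta$. For $\bar{x}\in\omega(x^0)$, the subsequence analysis of (ii) already produced $\Psi(x^{k_i+1})\to\Psi(\bar{x})$; comparing with $\Psi(x^{k_i+1})\to\zeta$ forces $\Psi(\bar{x})=\zeta$, so $\Psi\equiv\zeta$ on $\omega(x^0)$.

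I expect the main obstacle to be the core of (ii): under the weak Assumption~\ref{Assumption:A} one controls only Bregman distances, not Euclidean increments, so one cannot directly conclude $x^{k_i+1}\to\bar{x}$ or $y^{k_i}\to\bar{x}$. The subsequence device above, channeling $D_h\to0$ through Assumption~\ref{Assumption:A}(\ref{It1:3}) and the boundedness from Corollary~\ref{Corollary:1}, is what bridges this gap. The second delicate point is upgrading the mere lower semicontinuity of $g$ to the genuine convergence $\Psi(x^{k_i+1})\to\Psi(\bar{x})$ via the proximal minimality in (\ref{Eq:Alg2_2}), which is precisely the ingredient the closedness of $\partial\Psi$ requires.
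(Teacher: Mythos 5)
Your proposal is correct and follows essentially the same route as the paper: telescoping the Lyapunov decrease (\ref{Eq:Lem2_1}) for a fixed $M\in(\rho\,\underline{\lambda}^{-1},\overline{\lambda}^{-1}]$ to get (i), propagating convergence to the neighbouring iterates and to $y^{k}$ via Assumption~\ref{Assumption:A}(\ref{It1:3}) and Corollary~\ref{Corollary:1}, extracting a vanishing subgradient from the optimality condition of (\ref{Eq:Alg2_2}), proving convergence of the function values by combining lower semicontinuity with a minimality test against $\bar{x}$, and closing with the closedness of $\partial\Psi$. The only substantive deviation is in the upper estimate for the function values: you dispatch the smooth part by plain continuity of $f$ and run the minimality test on $g$ alone, whereas the paper tests $\Psi$ and invokes the $L$-smad and $\mu$-relative weak convexity inequalities (its (\ref{Theorem1_Eq6})--(\ref{Theorem1_Eq9})); your variant is valid under the paper's standing assumption that $f$ is continuously differentiable and is in fact a little cleaner at that step.
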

\begin{proof}
\noindent(i) Since $\rho < \underline{\lambda}/\overline{\lambda}$, we have that $ \rho \, \lambda_k^{-1} \leq \rho \, \underline{\lambda}^{-1} < \overline{\lambda}^{-1}$, and we choose $M \in ( \rho \, \underline{\lambda}^{-1}, \overline{\lambda}^{-1}]$.
From (\ref{Eq:Lem2_1}), together with the nonnegativeness of $D_h(x^k,x^{k+1})$ and $M \leq \lambda_k^{-1} $, we have $\forall k \in \mathbb{N}$
\begin{align}\label{Eq:Lem3_1}
\left(M -  \rho \underline{\lambda}^{-1} \right)D_h( x^{k-1},x^k ) \leq
\left(M -  \rho \lambda_k^{-1} \right)D_h( x^{k-1},x^k )  \leq  H_{k,M}-H_{k+1,M},
\end{align}
which implies, $\forall K \in \mathbb{N}$, that
\begin{align}\label{Eq:Lem3_11}
0 \leq \sum_{i=0}^K  \left(M -  \rho \underline{\lambda}^{-1} \right)D_h(x^{k-1},x^k ) \leq H_{0,M} - H_{K+1,M},
\end{align}
by summing both sides of (\ref{Eq:Lem3_1}) from $0$ to $K$. Since $\{H_{k,M}\}$ is convergent by Lemma~\ref{Lemma:1}(ii), letting $K \rightarrow \infty$, we conclude that the infinite sum exists and is finite, i.e.,
$$\sum_{i=0}^K  \left(M -   \rho \underline{\lambda}^{-1} \right)D_h(x^{k-1},x^k ) < \infty.$$
Since $ M - \rho \underline{\lambda}^{-1} >0$, we obtain directly that $ \sum_{i=0}^K D_h(x^{k-1}, x^k) \leq \infty$ and $ \lim_{k\rightarrow \infty } D_h(x^{k-1}, x^k) =0 $.

\vspace{12pt}

\noindent(ii) Let $\overline{x}$ be a limit point of $\{x^k\}_{k\in \mathbb{N}}$. Let $\{x^{k_i}\}$  be a subsequence such that $\lim_{i\rightarrow \infty} x^{k_i} = \overline{x}$. Since $D_h(x^{k_i-1},x^{k_i}) \rightarrow 0$, and we know $\{x^{k_i-1}\}_{i\in \mathbb{N}}$ is bounded according to Corollary~\ref{Corollary:1}, Assumption~\ref{Assumption:B}(ii) implies $x^{k_i-1} \rightarrow \overline{x}$. Similarly, we get $x^{k_i-2} \rightarrow \overline{x}$. By the representation of $y^{k_i-1} = x^{k_i-1} + \beta_{k_i-1}(x^{k_i-1} - x^{k_i-2})$ or  $y^{k_i-1} = x^{k_i-1}$ (if $ x^{k_i-1} = x^{k_i-2}$ ), we obtain
\begin{align}\label{Eq:Lem3_2}
\|y^{k_i-1} - x^{k_i}\| &\leq \|x^{k_i-1}-x^{k_i}\| + \|x^{k_i-1} - x^{k_i-2}\| \nonumber \\
& \leq \| x^{k_i} -\overline{x} \| +  2\| x^{k_i-1} -\overline{x} \|+  \| x^{k_i-2} -\overline{x} \| \rightarrow 0.
\end{align}
On one hand, we prove that there exists $v^{k_i} \in  \partial \Psi(x^{k_i})$ such that $  v^{k_i} \rightarrow 0$. By using the first-order optimality condition of the minimization problem (\ref{Eq:Alg2_2}), we obtain
\begin{align*}
0 \in \lambda_{k_i-1} \partial g(x^{k_i}) + \lambda_{k_i-1} \nabla f(y^{k_i-1}) + \nabla h(x^{k_i}) - \nabla h(y^{k_i-1}), \quad \forall k_i \in \mathbb{N}.
\end{align*}
Therefore, we observe that
\begin{align}
 \nabla f(x^{k_i}) -\nabla f(y^{k_i-1}) -\lambda_{k_i-1}^{-1}\big(\nabla h(x^{k_i}) - \nabla h(y^{k_i-1})\big) \in \partial \Psi(x^{k_i}), \quad \forall k_i \in \mathbb{N}.  \label{Theorem1_Eq1}
\end{align}
Taking limits on the left hand in (\ref{Theorem1_Eq1}) we have that
\begin{align}
&\|\nabla f(x^{k_i}) -\nabla f(y^{k_i-1}) -\lambda_{k_i-1}^{-1}(\nabla h(x^{k_i}) - \nabla h(y^{k_i-1}))\| \nonumber\\
& \quad \leq \|\nabla f(x^{k_i}) -\nabla f(y^{k_i-1})\|+\underline{\lambda}^{-1}\|\nabla h(x^{k_i}) - \nabla h(y^{k_i-1})\| \rightarrow 0, \quad k_i \rightarrow \infty, \label{Theorem1_Eq2}
\end{align}
where the limit can be got according to (\ref{Eq:Lem3_2}) and the continuity of $\nabla f$ and $\nabla h$. Thus, we get that
there exist $v^{k_i} \in  \partial \Psi(x^{k_i})$ such that $\| v^{k_i}\| \rightarrow 0$ as $k_i \rightarrow \infty$.

On the other hand, we derive that $\Psi(x^{k_i}) \rightarrow \Psi(\overline{x})$, $k_i \rightarrow \infty$. From the lower-semi-continuity of $\Psi$, we have
\begin{align}
 \Psi(\overline{x})  \leq \lim \inf_{i \rightarrow \infty} \Psi(x^{k_i}). \label{Theorem1_Eq3}
\end{align}
According to the iteration step (\ref{Eq:Alg2_2}) of BPGe, for $k_i \geq 1$, we have
\begin{align*}
\lambda_{k_i-1} \, g(x^{k_i})+ \left\langle x^{k_i}-\overline{x}, \lambda_{k_i-1} \nabla f(y^{k_i-1}) \right\rangle + D_h(x^{k_i},y^{k_i-1}) \nonumber\\ \qquad \leq \lambda_{k_i-1} g(\overline{x})+D_h(\overline{x},y^{k_i-1}) .
\end{align*}
Adding $\lambda_{k_i-1}f(x^{k_i})$ to both sides,  we have
\begin{align}\label{Theorem1_Eq4}
&\lambda_{k_i-1}\Psi(x^{k_i})+ \left\langle x^{k_i}-\overline{x}, \lambda_{k_i-1} \nabla f(y^{k_i-1}) \right\rangle + D_h(x^{k_i},y^{k_i-1}) \nonumber\\
& \qquad \leq \lambda_{k_i-1} g(\overline{x})+\lambda_{k_i-1}f(x^{k_i})+D_h(\overline{x},y^{k_i-1}), \quad \forall k_i \in \mathbb{N}.
\end{align}
After rearranging terms, for all $ k_i \in \mathbb{N}$, it follows
\begin{align}\label{Theorem1_Eq5}
\Psi(x^{k_i}) \leq \Psi(\overline{x})+ f(x^{k_i})-f(\overline{x}) - \left\langle x^{k_i}-\overline{x}, \nabla f(y^{k_i-1}) \right\rangle \nonumber\\
\qquad \quad  -\lambda_{k_i-1}^{-1}D_h(x^{k_i},y^{k_i-1}) +\lambda_{k_i-1}^{-1}D_h(\overline{x},y^{k_i-1}).
\end{align}
 $L$-smad property and $\mu$-relative weakly convexity of $(f,h)$ imply that  for all $ k_i \in \mathbb{N}$
 \begin{align}\label{Theorem1_Eq6}
&f(x^{k_i})-f(\overline{x}) - \left\langle x^{k_i}-\overline{x}, \nabla f(y^{k_i-1}) \right\rangle \nonumber\\
&\qquad \quad \leq L \, D_h(x^{k_i},\overline{x}) +  \left\langle x^{k_i}-\overline{x}, \, \nabla f(\overline{x})-\nabla f(y^{k_i-1}) \right\rangle \nonumber\\
&\qquad \quad = L\, D_h(x^{k_i},\overline{x}) +  D_f(x^{k_i},y^{k_i-1}) - D_f(x^{k_i},\overline{x})-D_f(\overline{x},y^{k_i-1}). \nonumber\\
&\qquad \quad \leq L\, D_h(x^{k_i},\overline{x}) + L \, D_h(x^{k_i},y^{k_i-1}) + \mu \, D_h(x^{k_i},\overline{x})+ \mu \,  D_h(\overline{x},y^{k_i-1})
\end{align}
Plugging (\ref{Theorem1_Eq6}) in (\ref{Theorem1_Eq5}), passing to the limit, together with the relationship $\underline{\lambda} \leq \lambda_{k_i} \leq \overline{\lambda}$,  we have
\begin{align}\label{Theorem1_Eq8}
\lim_{i\rightarrow \infty} \Psi(x^{k_i}) &\leq  \Psi(\overline{x})+ \lim_{i\rightarrow \infty}\bigg[(-\overline{\lambda}^{-1}+L)D_h(x^{k_i},y^{k_i-1}) + \nonumber\\ & \qquad \quad (\underline{\lambda}^{-1}+\mu)D_h(\overline{x},y^{k_i-1})+ (L+\mu) D_h(x^{k_i}, \overline{x})\bigg] \nonumber\\
 &\leq  \Psi(\overline{x})+ \lim_{i\rightarrow \infty} (\underline{\lambda}^{-1}+\mu)\left[D_h(\overline{x},y^{k_i-1})+D_h(x^{k_i}, \overline{x})\right],
\end{align}
where the second inequality is based on $L \leq \overline{\lambda}^{-1} \leq \underline{\lambda}^{-1}$ in BPGe. From (\ref{Eq:Lem3_2}), together with the continuity of $\nabla h$, we obtain
\begin{align*}
& \lim_{i\rightarrow \infty} \left[D_h(\overline{x},y^{k_i-1})+D_h(x^{k_i}, \overline{x})\right]  \nonumber\\ &\qquad \quad
\leq \lim_{i\rightarrow \infty} \left[  D_h(\overline{x},y^{k_i-1})+ D_h(y^{k_i-1},\overline{x})+D_h(x^{k_i}, \overline{x}) + D_h(\overline{x},x^{k_i})   \right] \nonumber\\ &\qquad \quad
\leq  \lim_{i\rightarrow \infty} \bigg[\|\nabla h(y^{k_i-1})-\nabla h(\overline{x})\|\| y^{k_i-1}-\overline{x}\| + \|\nabla h(x^{k_i})-\nabla h(\overline{x})\|\| x^{k_i}-\overline{x}\| \bigg] \nonumber\\
&\qquad \quad  = 0.
\end{align*}
Hence we have
\begin{align}\label{Theorem1_Eq9}
\lim \sup_{i\rightarrow \infty} \Psi(x^{k_i}) \leq \Psi(\overline{x}).
\end{align}
Combining (\ref{Theorem1_Eq3}) and (\ref{Theorem1_Eq9}) yields  $\Psi(x^{k_i}) \rightarrow \Psi(\overline{x})$, $k_i \rightarrow \infty$.

Thus, according to these results,  and the closedness of $\partial \Psi$ (see, Exercise 8 in Page 80 \cite{borwein2010convex}), we have $0 \in \partial \Psi(\overline{x})$.
\vspace{12pt}

\noindent(iii)  In view of Lemma \ref{Lemma:1} and (i), the sequence $\{H_{k,M} \}$ is convergent and $D_h(x^{k-1},x^k) \rightarrow 0$, these together with the definition of $H_{k,M}$ imply $\lim_{k\rightarrow \infty} \Psi(x^k)$ exists, denoted as $\zeta$.
According to the last part of the proof in (ii), and taking $\overline{x} \in \omega(x^0)$ with a convergent subsequence $\{x^{k_i}\}$ such  that $\lim_{i\rightarrow \infty} x^{k_i} = \overline{x}$, we know that
 $$\zeta = \lim_{i\rightarrow \infty} \Psi(x^{k_i}) = \Psi(\overline{x}).$$
Thus the conclusion is completed since $\overline{x}$ is arbitrary.
\end{proof}

Next, we prove a global $\mathcal{O}(\frac{1}{K})$ sublinear convergence rate for the sequence $\min_{k\in \mathbb{N}}D_h(x^{k-1},x^k)$ of the algorithm. In fact, the linear convergence rate can also be got if we add more assumptions, like KL property and concrete KL exponent (we refer to \cite{li2017calculus}), based on similar deductions as in  \cite[Theorem 6.3]{bolte2018first}.

\begin{corollary}\label{Corollary:2}
Suppose $\rho < \underline{\lambda}/\overline{\lambda}$ and $\{x^k\}_{k\in \mathbb{N}} $ be a sequence generated from $x^0$ by BPGe. Then for all $K \geq 1$, $\min_{1 \leq k \leq K}D_h(x^{k-1},x^k)$ converges with a sublinear rate as $\mathcal{O}(\frac{1}{K})$.
\end{corollary}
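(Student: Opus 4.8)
The plan is to convert the summable-distance estimate already established inside the proof of Lemma~\ref{Theorem:1}(i) into a rate bound, using the elementary fact that the minimum of a finite family of nonnegative numbers is at most their average. Since the hypothesis $\rho < \underline{\lambda}/\overline{\lambda}$ is the same one used in Lemma~\ref{Theorem:1}, I would begin by fixing $M \in (\rho\,\underline{\lambda}^{-1}, \overline{\lambda}^{-1}]$ exactly as there, so that the constant $c := M - \rho\,\underline{\lambda}^{-1}$ is strictly positive; this is the only place where the standing assumption $\rho < \underline{\lambda}/\overline{\lambda}$ is needed.

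The engine of the argument is inequality (\ref{Eq:Lem3_1}), which already gives, for every $k \in \mathbb{N}$,
$$ c\, D_h(x^{k-1}, x^k) \leq H_{k,M} - H_{k+1,M}. $$
Summing this telescoping bound from $k = 0$ to $K$ collapses the right-hand side to
$$ c \sum_{k=0}^{K} D_h(x^{k-1}, x^k) \leq H_{0,M} - H_{K+1,M}. $$
To make the right-hand side independent of $K$, I would invoke the uniform lower bound $H_{K+1,M} = \Psi(x^{K+1}) + M\, D_h(x^K, x^{K+1}) \geq \Psi(x^{K+1}) \geq \Psi^{\ast} > -\infty$, which follows from the nonnegativity of $D_h$ together with the finiteness of $\inf \Psi$ assumed for problem (\ref{Eq:P}). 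Hence all partial sums are controlled by the $K$-independent quantity $(H_{0,M} - \Psi^{\ast})/c$.

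Finally, because every summand is nonnegative, the minimum is dominated by the average:
$$ \min_{1 \leq k \leq K} D_h(x^{k-1}, x^k) \leq \frac{1}{K} \sum_{k=1}^{K} D_h(x^{k-1}, x^k) \leq \frac{H_{0,M} - \Psi^{\ast}}{c\,K}, $$
which is precisely the asserted $\mathcal{O}(1/K)$ decay. I do not expect a genuine obstacle here, since the substantive work is already done in Lemma~\ref{Lemma:1}(ii) and in deriving (\ref{Eq:Lem3_1}); the corollary is essentially bookkeeping. The only points deserving minor care are verifying $c > 0$ under the stated hypothesis and extracting a clean constant-in-$K$ bound on $H_{0,M} - H_{K+1,M}$ from the lower boundedness of $\{H_{k,M}\}$ — the passage from the telescoped sum to the minimum-over-iterates is entirely standard.
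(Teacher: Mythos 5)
Your proposal is correct and follows essentially the same route as the paper: telescoping the per-iteration decrease of $H_{k,M}$ from Lemma~\ref{Lemma:1}(ii) and bounding the minimum by the average. The only cosmetic differences are that the paper fixes $M=\overline{\lambda}^{-1}$ explicitly and leaves the bound as $(H_{1,M}-H_{K+1,M})/\bigl(K(\overline{\lambda}^{-1}-\rho\,\underline{\lambda}^{-1})\bigr)$, whereas you additionally invoke $H_{K+1,M}\geq \Psi^{\ast}$ to make the numerator manifestly independent of $K$ --- a harmless (arguably tidier) refinement.
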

\begin{proof}
Set $M = \overline{\lambda}^{-1}$, recall (\ref{Eq:Lem3_11}), now for $K \geq 1$,
\begin{align*}
0 \leq \sum_{i=1}^K  \left(\overline{\lambda}^{-1} - \rho \, \underline{\lambda}^{-1} \right)D_h(x^{k-1},x^k ) \leq H_{1,M} - H_{K+1,M}.
\end{align*}
Hence we obtain
\begin{align}
 \min_{1 \leq k \leq K} D_h(x^{k-1}, x^k) \leq \frac{H_{1,M} - H_{K+1,M}}{K\left(\overline{\lambda}^{-1} - \rho \, \underline{\lambda}^{-1} \right)}.
\end{align}
\end{proof}

Next, we focus on performing a global convergence analysis. We aim to prove that the sequence $\{x^k\}_{k\in \mathbb{N}}$ generated by BPGe converges to a critical point of the objective function $\Psi$ defined in (\ref{Eq:P}). In order to prove global convergence, we use the proof methodology introduced in reference \cite{attouch2013convergence}. This proof methodology proves global convergence result for several types of nonconvex nonsmooth problems. Other similar forms were referred in many works \cite[Section 3.2]{ochs2014ipiano}, \cite[Section 4]{pock2016inertial}, \cite[Section 4.2]{bolte2018first}.

For the reader’s convenience, we firstly describe the proof methodology summarized in \cite[Theorem 3.7]{ochs2014ipiano} with a few modifications and then we apply it to prove the convergence of BPGe in  Theorem \ref{Theorem:3}.
\begin{theorem} \cite[Theorem 3.7]{ochs2014ipiano} \label{Theorem:2}
Let $F: \mathbb{R}^{2d} \rightarrow (-\infty, \infty]$ be a proper lower-semi-continuous function. Assume that  $\{z^k\}_{k \in \mathbb{N}}:=\{ (x^k, x^{k-1}) \}_{k \in \mathbb{N}}$ is a sequence generated by a general algorithm from $z^0:=(x^0,x^0)$,  for which the following three conditions are satisfied for any $k \in \mathbb{N}$.
\begin{description}
    \item [(H1)] For each $k \in \mathbb{N}$, there exists a positive `$a$' such that
	                 $$F(z^{k+1}) + a \, \|x^k-x^{k-1}\|^2 \leq F(z^{k}), \qquad \forall k \in \mathbb{N}. $$
	\item [(H2)] For each $k \in \mathbb{N}$, there exists a positive `$b$' such that for some $v^{k+1} \in \partial F(z^{k+1})$ we have
	  $$\|v^{k+1} \| \leq \frac{b}{2}\big(\|x^{k+1}-x^{k}\|+ \|x^k-x^{k-1}\| \big), \qquad \forall k \in \mathbb{N}.  $$
    \item [(H3)] There exists a subsequence $(z^{k_j})_{j\in \mathbb{N}}$ such that
    $z^{k_j} \rightarrow \tilde{z}$ and $ F(z^{k_j})\rightarrow F(\tilde{z})$.
\end{description}
Moreover,  if $F$ have the Kurdyka-{\L}ojasiewicz property at the limit point $\tilde{z}=(\tilde{x},\tilde{x} )$ specified in (H3). Then,  the sequence $\{x^k\}_{k \in \mathbb{N}}$ has finite length, i.e., $\sum_{k=1}^{\infty}\|x^k-x^{k-1}\| < \infty$, and converges to $\bar{x} =\tilde{x}$ as $k\rightarrow \infty$, where $(\bar{x}, \bar{x})$ is a critical point of $F$.
 \end{theorem}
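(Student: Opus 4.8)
The plan is to follow the now-standard abstract argument of Attouch--Bolte--Svaiter built on the three ingredients (H1)--(H3) together with the KL inequality; the references \cite{attouch2013convergence,ochs2014ipiano} supply the template and I would only adapt the bookkeeping. Throughout write $d_k := \|x^k - x^{k-1}\|$ and $F_* := F(\tilde z)$. First I would pin down the limiting energy value. By (H1) the sequence $\{F(z^k)\}$ is nonincreasing, and by (H3) a subsequence converges to $F(\tilde z)=F_*$; hence $F(z^k)\downarrow F_*$ and $F(z^k)\ge F_*$ for all $k$. If $F(z^{k_0})=F_*$ for some $k_0$, monotonicity forces $F(z^k)=F_*$ for every $k\ge k_0$, and then (H1) gives $d_k=0$ for $k\ge k_0$, so the iteration is eventually constant and the claim is immediate. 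Thus I may assume $F(z^k)>F_*$ for all $k$.

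The heart of the argument is the activation of the KL inequality of Definition~\ref{Definition:KL}. Since $z^{k_j}\to\tilde z$ and $F(z^{k_j})\to F_*$, for $k$ large the iterate $z^k$ lies in the neighborhood $U$ of $\tilde z$ with $0<F(z^k)-F_*<\eta$, so the desingularizing function $\psi$ is available. On this tail I would combine three facts: the KL inequality $\psi'(F(z^k)-F_*)\,\mathrm{dist}(0,\partial F(z^k))\ge 1$; the subgradient bound (H2), which gives $\mathrm{dist}(0,\partial F(z^k))\le\|v^k\|\le\tfrac{b}{2}(d_k+d_{k-1})$; and the descent estimate (H1), $F(z^k)-F(z^{k+1})\ge a\,d_k^2$. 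Using concavity of $\psi$,
$$\Delta_k := \psi\big(F(z^k)-F_*\big)-\psi\big(F(z^{k+1})-F_*\big)\ \ge\ \psi'\big(F(z^k)-F_*\big)\,\big(F(z^k)-F(z^{k+1})\big),$$
and chaining the three inequalities yields
$$\Delta_k\ \ge\ \frac{a\,d_k^2}{\tfrac{b}{2}(d_k+d_{k-1})},\qquad\text{i.e.}\qquad d_k^2\ \le\ \frac{b}{2a}\,\Delta_k\,(d_k+d_{k-1}).$$

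From this quadratic-type bound I would extract summability by an arithmetic--geometric-mean step: applying $\sqrt{uv}\le u+\tfrac14 v$ with $u=\tfrac{b}{2a}\Delta_k$ and $v=d_k+d_{k-1}$ gives $d_k\le\tfrac{b}{2a}\Delta_k+\tfrac14(d_k+d_{k-1})$, hence $\tfrac34 d_k-\tfrac14 d_{k-1}\le\tfrac{b}{2a}\Delta_k$. Summing over a tail $k=K_0+1,\dots,N$, the left side telescopes to at least $\tfrac12\sum_{k=K_0+1}^{N}d_k-\tfrac14 d_{K_0}$, while the right side telescopes and is dominated by $\tfrac{b}{2a}\,\psi\big(F(z^{K_0+1})-F_*\big)$. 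The resulting bound is uniform in $N$, so $\sum_k\|x^k-x^{k-1}\|<\infty$ and the sequence has finite length. Finite length makes $\{x^k\}$ Cauchy in $\mathbb{R}^d$, hence convergent to some $\bar x$; since $x^{k_j}\to\tilde x$, necessarily $\bar x=\tilde x$ and $z^k\to(\bar x,\bar x)=\tilde z$. Finally, finite length forces $d_k\to 0$, so (H2) produces $v^{k+1}\in\partial F(z^{k+1})$ with $\|v^{k+1}\|\to 0$; invoking the closedness of the (limiting) subdifferential $\partial F$ together with $z^k\to\tilde z$ and $F(z^k)\to F(\tilde z)$ gives $0\in\partial F(\tilde z)$, i.e. $(\bar x,\bar x)$ is a critical point of $F$.

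I expect the delicate part to be the neighborhood bookkeeping rather than the algebra: one must ensure that $z^k$ stays inside $U$ with $F(z^k)-F_*\in(0,\eta)$ for \emph{all} large $k$, not merely along the convergent subsequence. The clean way is to choose $K_0$ so large that $z^{K_0}$ is close to $\tilde z$ and $\tfrac{b}{2a}\psi(F(z^{K_0+1})-F_*)$ is as small as desired, and then run the telescoping estimate inductively, using the accumulated finite-length partial sums to control $\|x^k-\tilde x\|$ and thereby keep the iterates in the region where the KL inequality is valid. This bootstrap, which prevents the sequence from escaping $U$ before summability is established, is the only genuinely subtle point; the descent, error-bound, and continuity inputs are supplied verbatim by (H1)--(H3).
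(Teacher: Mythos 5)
The paper does not actually prove this statement: it is imported verbatim from Ochs et al.\ \cite[Theorem 3.7]{ochs2014ipiano} and used as a black box in the proof of Theorem~\ref{Theorem:3}, so there is no in-paper proof to compare against. Your reconstruction follows exactly the standard Attouch--Bolte--Svaiter/iPiano template that the cited proof uses; the algebra (the KL chaining of (H1)--(H2), the bound $\sqrt{uv}\le u+\tfrac14 v$, and the telescoping that yields $\tfrac12\sum_k d_k\le \tfrac14 d_{K_0}+\tfrac{b}{2a}\psi(F(z^{K_0+1})-F_*)$) is correct, and you rightly single out the one genuinely delicate point --- keeping the \emph{entire} tail of the sequence, not merely the convergent subsequence, inside the KL neighborhood $U$ with $F(z^k)-F_*\in(0,\eta)$ --- together with the correct inductive bootstrap that closes it. The only caveat worth recording is that your key step $\Delta_k\ge \psi'\bigl(F(z^k)-F_*\bigr)\bigl(F(z^k)-F(z^{k+1})\bigr)$ uses concavity of the desingularizing function $\psi$, which is part of the KL definition in \cite{attouch2013convergence,ochs2014ipiano} but is not stated in Definition~\ref{Definition:KL} of this paper; without concavity that inequality can fail, so your argument (like the original) proves the theorem under the standard concave-KL hypothesis rather than the literally weaker one written here.
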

In our paper, what we need is to verify that the conditions given in Theorem \ref{Theorem:2} are satisfied
for $F(x,y) = \Psi(x) + MD_h(y, x)$ and the sequence $(x^k, x^{k-1})_{k\in \mathbb{N}} \in \mathbb{R}^{2d}$ generated by the BPGe algorithm.

In order to guarantee the three conditions hold, we need another assumption. The first two requirements of the assumption were also required in \cite[see Assumption D(ii)]{bolte2018first}, and the third condition is easily verified.
\begin{assumption}\label{Assumption:C}
\begin{enumerate}[(i)]
\item $h$ is $\sigma$-strongly convex  on $\mathbb{R}^d$.\label{It2:1}
\item $\nabla h,\nabla f$ are Lipschitz continuous on any bounded subset of $\mathbb{R}^d$.\label{It2:2}
\item There exists a bounded $u$ such that $u \in \partial (\nabla h)$ on any bounded subset of $\mathbb{R}^d$.
\end{enumerate}
\end{assumption}
In fact, Assumption~\ref{Assumption:C}(\ref{It2:1}-\ref{It2:2}) can guarantee that Assumption~\ref{Assumption:B}(\ref{It1:2}-\ref{It1:3}) hold for the bounded sequence $\{x_k\}_{k \in \mathbb{N}}$.

The next task is to verify the three conditions one by one. Then, together with Theorem  \ref{Theorem:2}, we obtain the result that, under proper parameter selection, the whole sequence generated by BPGe converges to a critical point of the objective function.
\begin{theorem}\label{Theorem:3}
Suppose $\rho < \underline{\lambda}/\overline{\lambda}$. Let $\{x^k\}_{k\in \mathbb{N}} $ be a sequence generated from $x^0$ by BPGe.
If $F(x, y) = \Psi(x) + MD_h(y, x)$(where  $M \in ( \rho \, \underline{\lambda}^{-1}, \overline{\lambda}^{-1}]$ ) satisfies the Kurdyka--{\L}ojasiewicz property at some limit point $\tilde{z}=(\tilde{x},\tilde{x})\in \mathbb{R}^{2d}$ and Assumption~\ref{Assumption:C} holds, then
\begin{enumerate}[(i)]
\item The sequence $\{x^k\}_{k\in \mathbb{N}} $ has finite length, i.e. $\sum_{k=1}^{\infty}\|x^k-x^{k-1}\| <\infty$.
\item $x^k \rightarrow \tilde{x}$ as $k \rightarrow \infty$, and $\tilde{x}$ is a critical point of $\Psi$.
\end{enumerate}
\end{theorem}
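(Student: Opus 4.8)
The plan is to verify the three structural conditions (H1), (H2), (H3) of Theorem~\ref{Theorem:2} for the choice $F(x,y) = \Psi(x) + MD_h(y,x)$ together with the sequence $z^k := (x^k, x^{k-1})$, and then to invoke Theorem~\ref{Theorem:2} directly. The observation that makes everything fit is that $F(z^k) = \Psi(x^k) + MD_h(x^{k-1}, x^k) = H_{k,M}$, so that the Lyapunov sequence of Lemma~\ref{Lemma:1} is exactly the value of $F$ along the iterates.

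\textbf{Condition (H1).} Starting from the descent inequality \eqref{Eq:Lem2_1}, I would use $M \leq \overline{\lambda}^{-1} \leq \lambda_k^{-1}$ to drop the nonpositive term $(M-\lambda_k^{-1})D_h(x^k,x^{k+1})$, obtaining $H_{k+1,M}-H_{k,M} \leq -(M-\rho\lambda_k^{-1})D_h(x^{k-1},x^k)$. Since $\rho < \underline{\lambda}/\overline{\lambda}$ and $M > \rho\underline{\lambda}^{-1}$, the constant $c := M-\rho\underline{\lambda}^{-1}$ is strictly positive and bounds $M-\rho\lambda_k^{-1}$ from below. The $\sigma$-strong convexity of $h$ (Assumption~\ref{Assumption:C}(\ref{It2:1})) gives $D_h(x^{k-1},x^k) \geq \tfrac{\sigma}{2}\|x^k-x^{k-1}\|^2$, so (H1) holds with $a = c\sigma/2 > 0$.

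\textbf{Condition (H2).} This is the technical heart. First I would compute $\partial F$. The $y$-subdifferential is $M(\nabla h(y)-\nabla h(x))$, while the $x$-subdifferential requires differentiating the Bregman term through the Hessian-like object of $h$; using Assumption~\ref{Assumption:C}(iii) to select a bounded $U \in \partial(\nabla h)(x)$, one finds $\partial_x F(x,y) = \nabla f(x) + \partial g(x) - M\,U(y-x)$. Evaluating at $z^{k+1}=(x^{k+1},x^k)$ and inserting the first-order optimality condition of \eqref{Eq:Alg2_2}, namely $-\nabla f(y^k)-\lambda_k^{-1}(\nabla h(x^{k+1})-\nabla h(y^k)) \in \partial g(x^{k+1})$, the $x$-component of the chosen $v^{k+1}$ becomes $\nabla f(x^{k+1})-\nabla f(y^k)-\lambda_k^{-1}(\nabla h(x^{k+1})-\nabla h(y^k))-M\,U(x^k-x^{k+1})$. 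I would bound each term using the local Lipschitz continuity of $\nabla f$ and $\nabla h$ (Assumption~\ref{Assumption:C}(\ref{It2:2})) on the bounded set containing $\{x^k\}$ (bounded by Corollary~\ref{Corollary:1}), together with $\|x^{k+1}-y^k\| \leq \|x^{k+1}-x^k\|+\beta_k\|x^k-x^{k-1}\| \leq \|x^{k+1}-x^k\|+\|x^k-x^{k-1}\|$, valid since $\beta_k < 1$. The $y$-component $M(\nabla h(x^k)-\nabla h(x^{k+1}))$ is handled the same way, and collecting the constants yields $\|v^{k+1}\| \leq \tfrac{b}{2}(\|x^{k+1}-x^k\|+\|x^k-x^{k-1}\|)$.

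\textbf{Condition (H3) and conclusion.} Since $\{x^k\}$ is bounded (Corollary~\ref{Corollary:1}), some subsequence $x^{k_j}\to\tilde{x}$; the argument already used in Lemma~\ref{Theorem:1}(ii) gives $x^{k_j-1}\to\tilde{x}$, hence $z^{k_j}\to\tilde{z}=(\tilde{x},\tilde{x})$. Because $\Psi(x^{k_j})\to\Psi(\tilde{x})$ (Lemma~\ref{Theorem:1}(iii)) and $D_h(x^{k_j-1},x^{k_j})\to 0$ (Lemma~\ref{Theorem:1}(i)), we get $F(z^{k_j})\to\Psi(\tilde{x})=F(\tilde{z})$, which is (H3). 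With the KL hypothesis on $F$ at $\tilde{z}$, Theorem~\ref{Theorem:2} delivers the finite-length property (i) and convergence $x^k\to\tilde{x}$, with $(\tilde{x},\tilde{x})$ critical for $F$. Finally, reading off $\partial_x F(\tilde{x},\tilde{x}) = \nabla f(\tilde{x})+\partial g(\tilde{x}) = \partial\Psi(\tilde{x})$ (the Bregman term vanishes when $y=x$) shows $0\in\partial\Psi(\tilde{x})$, establishing (ii). The main obstacle is the subdifferential computation for (H2)—in particular justifying the $-M\,U(y-x)$ term and, crucially, its uniform boundedness, which is precisely what Assumption~\ref{Assumption:C}(iii) is there to supply.
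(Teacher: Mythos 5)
Your proposal is correct and follows essentially the same route as the paper: both verify (H1) via Lemma~\ref{Lemma:1}(ii) together with the $\sigma$-strong convexity of $h$, verify (H2) by combining the first-order optimality condition of \eqref{Eq:Alg2_2} with the local Lipschitz continuity of $\nabla f$, $\nabla h$ and the bounded element of $\partial(\nabla h)$ from Assumption~\ref{Assumption:C}, obtain (H3) from Lemma~\ref{Theorem:1}, and then invoke Theorem~\ref{Theorem:2}. Your closing observation that $\partial_x F(\tilde{x},\tilde{x}) = \partial\Psi(\tilde{x})$ because the Bregman term vanishes on the diagonal is a small but welcome completion of the final step that the paper leaves implicit.
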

\begin{proof}
We first verify the three conditions of the Theorem~\ref{Theorem:2} for function $H$ and BPGe algorithm.
\begin{enumerate}
\item[(H1)] According to  Assumption ~\ref{Assumption:C} , since $h$ is strongly convex,  assume that $h$ is $\sigma$-strongly convex, that is $D_h(x, y) \ge \frac{\sigma}{2}\|x-y\|^2$ for any $x, y \in \mathbb{R}^d$.
We denote  $ a =\frac{\sigma}{2}(M - \rho \, \underline{\lambda}^{-1} )$.
For any $k \in \mathbb{N}$,
\begin{align*}
 F(x^{k+1},x^k) + a \|x^k-x^{k-1}\|^2
 &\leq F(x^{k+1},x^k) + (M - \rho \underline{\lambda}^{-1} )D_h(x^{k-1},x^k) \\
 &\leq F(x^{k+1},x^k) + (M - \rho \lambda_k^{-1} )D_h(x^{k-1},x^k) \\
 & = H_{k+1,M} + (M - \rho \lambda_k^{-1} )D_h(x^{k-1},x^k) \\
 &\leq H_{k,M} + (M - \lambda_k^{-1})\, D_h(x^k, x^{k+1}) \\
 &\leq  H_{k,M}  = F(x^{k},x^{k-1}),
 \end{align*}
where the first inequality is based on the strongly convexity of $h$ , the second inequality is based on $ \underline{\lambda} \le \lambda_k$, the third and the last equality is from the definitions of $H_{k,M}$ and $F$, the fourth inequality is from Lemma \ref{Lemma:1}(ii), and the fifth inequality is according to the nonnegativeness of $(M - \lambda_k^{-1})\, D_h(x^k, x^{k+1})$. Thus (H1) is verified.
\item[(H2)] From the optimal condition (\ref{Eq:Alg2_2}), there exists
$ -\nabla f(y^k) + \lambda_k^{-1}(\nabla h(y^k) - \nabla h(x^{k+1})) \in \partial g(x^{k+1})$.
Due to Corollary \ref{Corollary:1}, $\{x^k\}_{k\in \mathbb{N}}$ generated by BPGe is bounded, and so also $\{y^k\}_{k\in \mathbb{N}}$ is bounded.
Thus, according to Assumption~\ref{Assumption:C}(iii), there exists a bounded $u_k \in \partial (\nabla h(x^k))$, and
\begin{small}
\begin{align*}
v_{k+1} = \left( \nabla f(x^{k+1})-\nabla f(y^{k}) -\lambda_{k}^{-1}(\nabla h(x^{k+1})- \nabla h(y^k))-M\langle u_k, x^{k+1} -x^k \rangle, \,M(\nabla h(x^{k})- \nabla h(x^{k+1})) \right),
\end{align*}
\end{small}
such that $v_{k+1} \in \partial F(x^{k+1},x^k)$. According to Assumption~\ref{Assumption:C}, there exist $L_f, \, L_h, \,  \delta$ such that for any $k \in \mathbb{N}$, $\|\nabla h(x^{k+1}) - \nabla h(y^{k})\| \leq L_h\|x^{k+1}-y^{k}\|$, $\|\nabla f(x^{k+1}) - \nabla f(y^{k})\| \leq L_f\|x^{k+1}-y^{k}\|$, $\|u_k\| \leq \delta$.
Hence
\begin{align}
 \|v_{k+1}\| &\leq \left(L_f+ \lambda_{k}^{-1} L_h \right) \| x^{k+1}-y^{k} \|+ M (\delta+L_h) \|x^{k+1}-x^{k} \| \nonumber \\
&\leq \left(L_f+ (\lambda_{k}^{-1}+M)L_h  + M \delta\right) \| x^{k+1}-x^{k} \| + \left(L_f+ \lambda_{k}^{-1} L_h \right) \| x^{k}-x^{k-1} \| \nonumber \\
&\leq  \left(L_f+ (\lambda_{k}^{-1}+M)L_h  + M \delta\right)(  \| x^{k+1}-x^{k} \|+ \| x^{k}-x^{k-1} \| ),
\end{align}
And so, (H2) is satisfied.
%\vspace{6pt}
\item[(H3)] Condition (H3) naturally follows from Lemma \ref{Theorem:1}(ii).
\end{enumerate}
According to Theorem~\ref{Theorem:2}, combining the three conditions given in Theorem~\ref{Theorem:2} and KL property at $\tilde{z}$ could guarantee that conclusion (i) holds. Conclusion (ii) is followed by Theorem \ref{Theorem:3}(i).
Thus $\{x^k\}_{k\in \mathbb{N}}$ is a Cauchy sequence of $\mathbb{R}^d$ and converges to its limit point $\tilde{x}$. From Theorem~\ref{Theorem:2}   $\tilde{x}$ is the critical point.
\end{proof}

\section{Numerical Results}
\label{Sec:5}
In this section we perform several numerical tests in order to show the behaviour and the convergence speed up obtained when using the BPGe algorithm. We consider two important optimization problems in which the differentiable part of the objective \emph{does not admit} a  global Lipschitz continuous gradient: a convex Poisson linear inverse problem and a nonconvex quadratic inverse problem (and so the PG and PGe algorithms cannot be applied to these problems). It is important to remark that for cases where the differentiable part of the objective admits a  global Lipschitz continuous gradient the BPG and BPGe algorithms become the PG and PGe algorithms, respectively. That is, the BPG and BPGe methods can be applied but the performance in these cases it was already shown in \cite{wen2017linear}.

The main parameters in BPGe algorithm are the stepsizes $\lambda_k$ in Algorithm~1, and the parameter $\rho$ that gives the extrapolation coefficients $\beta_k$ in the line search method of Algorithm~2. In our tests we consider fixed stepsizes $\lambda_k = \lambda$. The influence of both parameters $\{ \lambda, \, \rho \}$ in order to fix suitable values is studied below in the tests.

All the numerical experiments have been performed in Matlab 2013a on a PC Intel(R) Xeon(R) CPU E5-2697 (2.6 GHz).

\subsection{Application to Poisson Linear Inverse Problems (PLIP)}

\begin{figure}[htbp]
  \centerline{\includegraphics[width=0.95\textwidth]{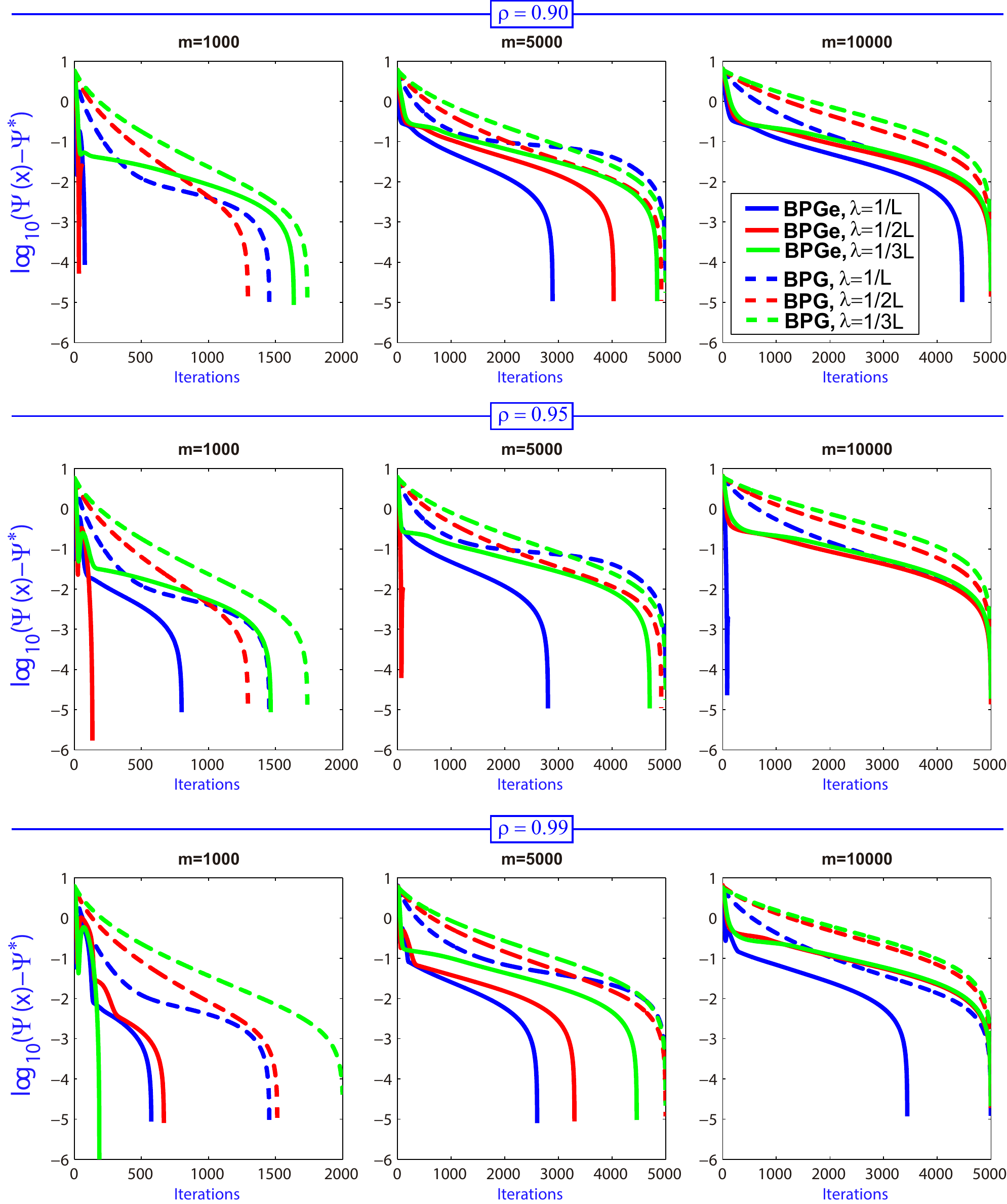}}
  \caption{Poisson Linear Inverse Problems tests (\emph{overdetermined} case $m > d$): evolution of the difference $\| \Psi(x_k)- \Psi(x^\ast)\|$ vs. iteration number, changing the parameters $\{ \lambda, \, \rho \}$ and for several problem sizes (measurements $m$) with fixed vector dimension $d= 100$.
  \label{Fig:1}}
\end{figure}

Poisson Linear Inverse Problems (that is, linear inverse problems in presence of Poisson noise) emerged in many fields, like astronomy, nuclear medicine (e.g., Positron Emission Tomography), inverse problems in fluorescence microscopy \cite{bauschke2016descent,bertero2009image,hohage2016}.
Therefore, the design of methods and estimators for such problems has been studied intensively over the last two decades (for a review, see \cite{bertero2009image,hohage2016}). Often these problems can be represented as a minimization problem like
\begin{align*}\label{PLIP}
\tag{PLIP} \min \big\{d(b,Ax) + \theta g(x): x \in \mathbb{R}^d_+ \big\}
\end{align*}
where $\theta>0$ is used to weigh matching the data fidelity criteria and its regularizer $g$, and $d(\cdot, \cdot)$ denotes a convex proximity measure between two vectors.

\begin{figure}[htb]
  \includegraphics[width=1.\textwidth]{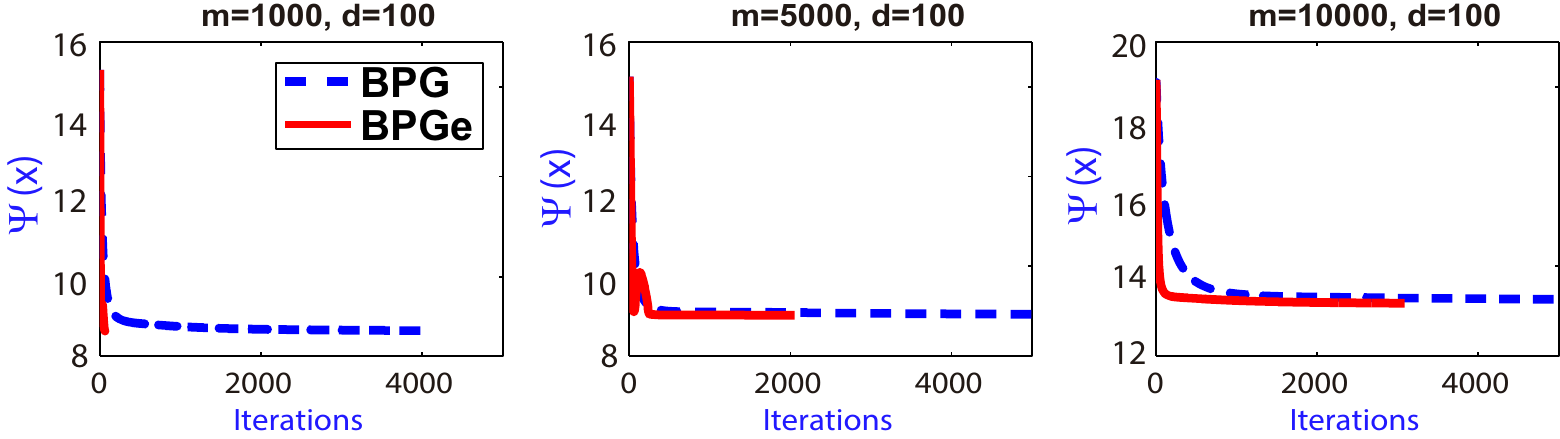}
  \caption{Poisson Linear Inverse Problems tests (\emph{overdetermined} case $m > d$): evolution of the objective function $\Psi(x_k)$ vs. iteration number, using the parameter values $\{ \lambda = 1/L, \, \rho = 0.99 \}$ and for several problem sizes (measurements $m$) with fixed vector dimension $d= 100$.
  \label{Fig:2a}}
\end{figure}

\begin{figure}[htb]
  \includegraphics[width=1.\textwidth]{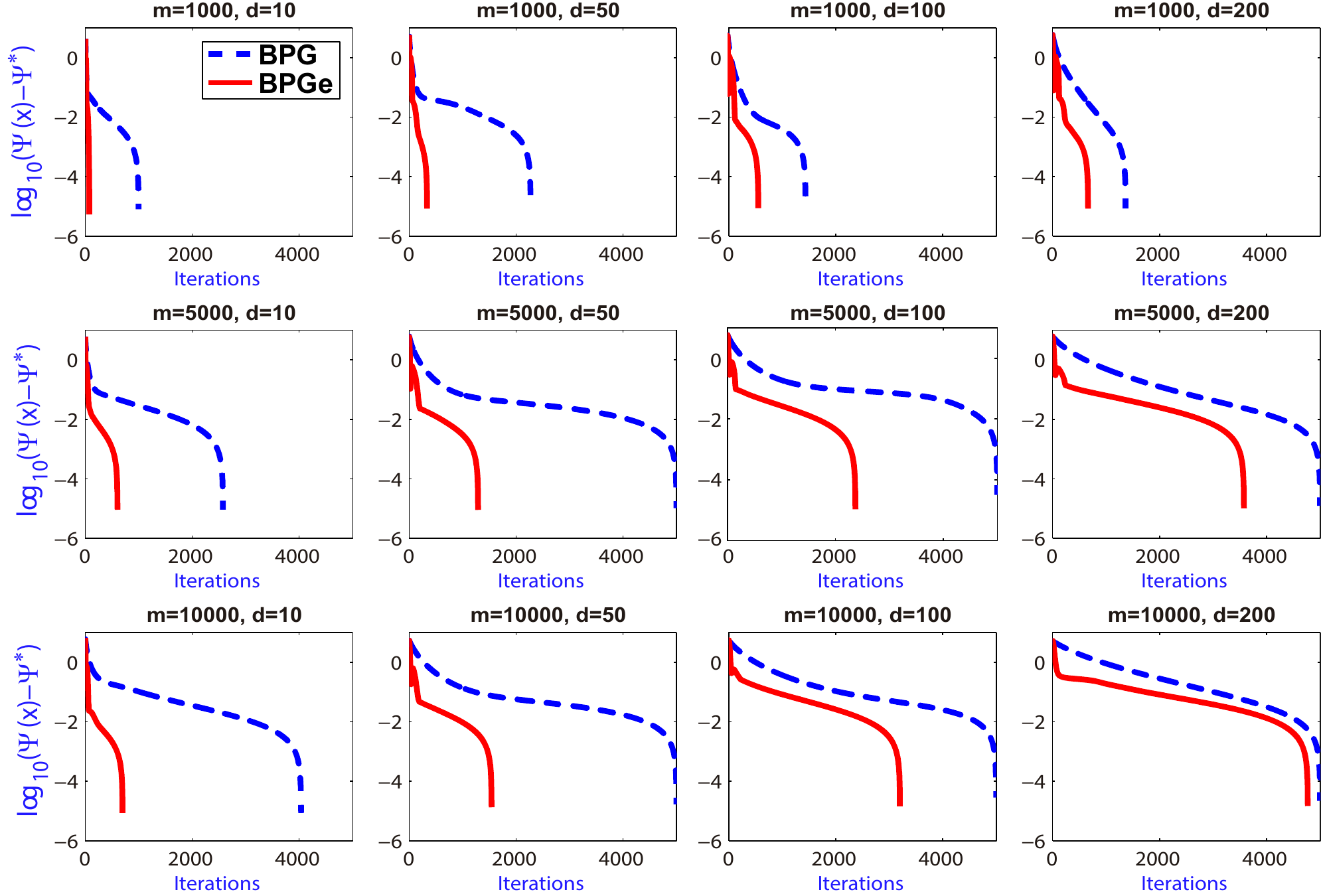}
  \caption{Poisson Linear Inverse Problems tests (\emph{overdetermined} case $m > d$): evolution of the difference $\| \Psi(x_k)- \Psi(x^\ast)\|$ vs. iteration number, using the parameter values $\{ \lambda = 1/L, \, \rho = 0.99 \}$ and for several problem sizes (measurements $m$ and vector dimensions $d$).
  \label{Fig:2}}
\end{figure}

A very well-known measure of proximity of two nonnegative vectors $Ax$ and $b$ is based on the Kullback-Liebler divergence: $$d(b,Ax) := \sum_{i=1}^m\bigg\{ b_i \log \frac{b_i}{(Ax)_i} + (Ax)_i - b_i\bigg\}.$$
which corresponds to noise of the negative Poisson log-likelihood function.
It is easy to find that $f:= d(b,Ax)$ has no globally Lipschitz continuous gradient  \cite{bauschke2016descent}, but satisfies $L$-smad condition with a kernel generating distance called Burg's entropy, denoted as $$h(x) = - \sum_{j=1}^d \log x_j, \text{dom}~h = \mathbb{R}_{+}^d,$$
and so now the Bregman distance is given by
$$
D_h(x,y) = \sum_{j=1}^d \bigg\{\frac{x_j}{y_j}- \log\bigg(\frac{x_j}{y_j}\bigg)-1\bigg\}.
$$

\begin{figure}[htbp]
  \centerline{\includegraphics[width=0.95\textwidth]{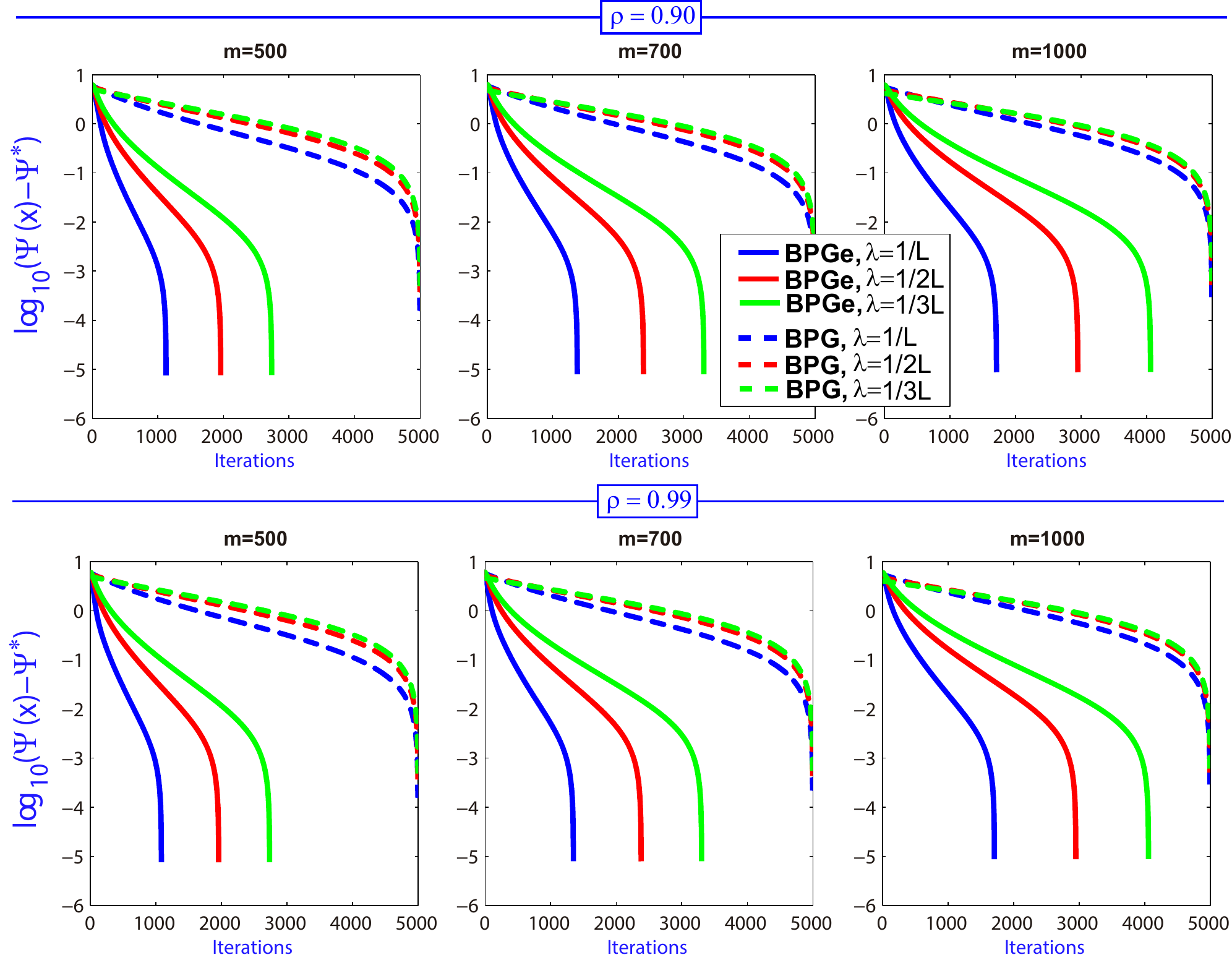}}
  \caption{Poisson Linear Inverse Problems tests (\emph{underdetermined} case $m < d$): evolution of the difference $\| \Psi(x_k)- \Psi(x^\ast)\|$ vs. iteration number, changing the parameters $\{ \lambda, \, \rho \}$ and for several problem sizes (measurements $m$) with fixed vector dimension $d= 5000$.
  \label{Fig:3}}
\end{figure}

\begin{figure}[htb]
  \centerline{\includegraphics[width=0.9\textwidth]{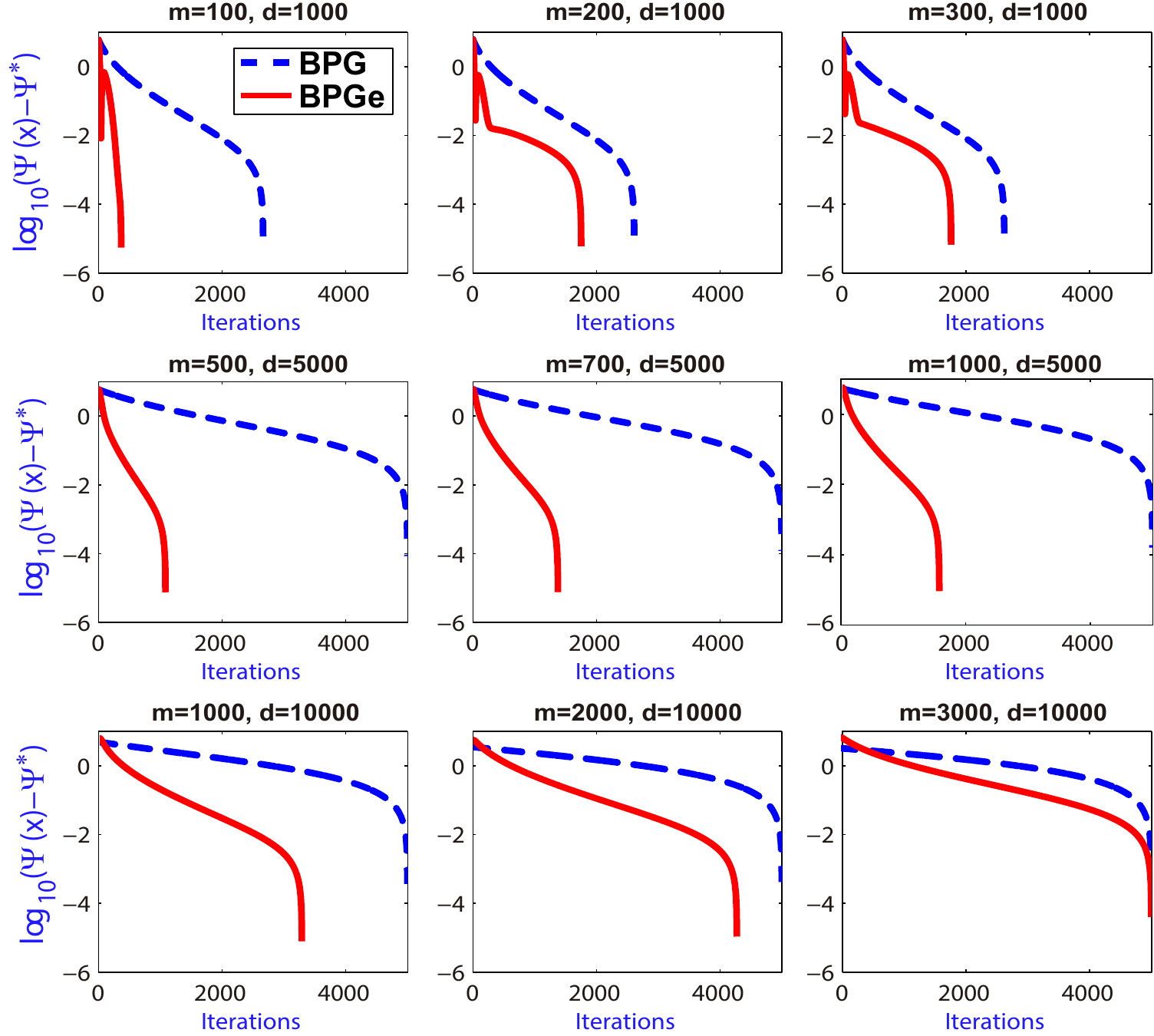}}
  \caption{Poisson Linear Inverse Problems tests (\emph{underdetermined} case $m < d$): evolution of the difference $\| \Psi(x_k)- \Psi(x^\ast)\|$ vs. iteration number, using the parameter values $\{ \lambda = 1/L, \, \rho = 0.99 \}$ and for several problem sizes (measurements $m$ and vector dimensions $d$).
  \label{Fig:4}}
\end{figure}

\begin{table*}[htbp]
\centering
  \small
%\hspace*{-1.5cm}
\begin{center}{\emph{Overdetermined} case}\end{center}
  \hspace*{-0.7cm}{\tt \noindent\begin{tabular}{|r|r|rlrl|rlrl|}
    \cline{3-10}
    \multicolumn{2}{c|}{} & \multicolumn{4}{c|}{$\lambda = 1/L$}
       & \multicolumn{4}{c|}{$\lambda = 1/(3L)$} \\
    \hline
    m & d   & $T_{BPGe}$ & $\frac{T_{BPGe}}{T_{BPG}}$ & $N_{BPGe}$ & $\frac{NI_{BPGe}}{NI_{BPG}}  $& $T_{BPGe}$ & $\frac{T_{BPGe}}{T_{BPG}}$ & $N_{BPGe}$ & $\frac{NI_{BPGe}}{NI_{BPG}}$   \\
    \hline
    \hline
\rowcolor[gray]{0.9}
 1000 & 10  & 0.08  &   0.07 &  74  & 0.07        &    0.21  &  0.22  &   279  & 0.21  \\
      & 50  & 0.40  &   0.15 &  336 & 0.15        &    0.14  &  0.16  &   155  & 0.15${}^{a}$\\
      & 100 & 1.13  &   0.41 &  574 & 0.40        &    0.32  &  0.10  &   187  & 0.09${}^{a}$   \\
      & 200 & 1.68  &   0.63 &  665 & 0.49        &    0.44  &  0.07  &   226  & 0.07   \\
\rowcolor[gray]{0.9}
 5000 & 10  & 0.77  &   0.24 &  605  & 0.23        &    0.83  &  0.22  &   745  & 0.21  \\
      & 50  & 3.32  &   0.26 &  1291 & 0.26        &    4.16  &  0.34  &   1353 & 0.32 \\
      & 100 & 7.50  &   0.53 &  2602 & 0.52        &   13.97  &  0.96  &   4460 & 0.89   \\
      & 200 & 13.43 &   0.72 &  3577 & 0.72        &   20.26  &  1.12  &   5000 & 1.00     \\
\rowcolor[gray]{0.9}
 10000& 10  & 2.53   &   0.18 &   699& 0.17 &    0.50  &  0.03  &   141  & 0.03${}^{a}$\\
      & 50  & 6.68   &   0.33 &  1543& 0.31 &   15.36  &  0.68  &   3255 & 0.65 \\
      & 100 & 16.75  &   0.70 &  3441& 0.69 &   23.90  &  1.02  &   5000 & 1.00   \\
      & 200 & 30.32  &   0.99 &  4770& 0.95 &   30.20  &  1.05  &   5000 & 1.00 \\
    \hline
  \end{tabular}}

  \bigskip
  \begin{center}{\emph{Underdetermined} case}
  \end{center}
 \hspace*{-1.cm}{\tt \noindent\begin{tabular}{|r|r|rrrr|rrrr|}
    \cline{3-10}
    \multicolumn{2}{c|}{} & \multicolumn{4}{c|}{$\lambda = 1/L$}
       & \multicolumn{4}{c|}{$\lambda = 1/(3L)$} \\
    \hline
    m & d   & $T_{BPGe}$ & $\frac{T_{BPGe}}{T_{BPG}}$ & $N_{BPGe}$ & $\frac{NI_{BPGe}}{NI_{BPG}}  $& $T_{BPGe}$ & $\frac{T_{BPGe}}{T_{BPG}}$ & $N_{BPGe}$ & $\frac{NI_{BPGe}}{NI_{BPG}}$   \\
    \hline
    \hline
\rowcolor[gray]{0.9}
  100&   1000  &    0.60 &  0.15   & 369  &  0.14 &   2.19 &  0.25&   1314  & 0.26\\
  200&         &    5.03 &  0.89   & 1754 &  0.67 &   3.56 &  0.29&   1298  & 0.26\\
  300&         &    4.50 &  0.78   & 1760 &  0.67 &   2.81 &  0.26&   1315  & 0.26\\
\rowcolor[gray]{0.9}
  500 &  5000  &   9.17&   0.23 &  1085&   0.22  &   70.00 &  1.49 & 5000 &  1.00\\
  700 &        &  12.85&   0.28 &  1378&   0.28  &  115.16 &  1.34 & 5000 &  1.00\\
 1000 &        &  27.51&   0.32 &  1565&   0.31  &  345.13 &  1.18 & 5000 &  1.00\\
\rowcolor[gray]{0.9}
 1000 & 10000 &  210.06 &  0.66 &   3284 &  0.66 &   549.52 &  1.03 &   5000 &  1.00\\
 2000 &       &  643.71 &  0.89 &   4271 &  0.85 &   886.94 &  1.07 &   5000 &  1.00\\
 3000 &       &  967.90 &  1.02 &   5000 &  1.00 &   1084.82&  1.04 &   5000 &  1.00\\
    \hline
  \end{tabular}}
  \caption{Poisson Linear Inverse Problems tests: CPU-time and number of iterations for different cases of $m$ (number of data) and $d$ (dimension) for two different values of the $\lambda$ parameter for \emph{overdetermined} (top) and \emph{underdetermined} (bottom) cases. $T_{BPGe}$ and $T_{BPG}$ denote the CPU-time of BPGe and BPG algorithms, and $N_{BPGe}$ and $N_{BPG}$ the number of iterations to reach the \texttt{EXIT} criteria. Superscript --a-- points out discordant cases related with a fast linear convergence. \label{table1}}
  \end{table*}

Therefore, we have that
\begin{enumerate}[(i)]
	\item $(f,h)$ is $L$-smad, where $L \geq  \|b\|_1 $(according to Lemma~7 in \cite{bauschke2016descent}), and $f$ is $0$-relative weakly convex to $h$ since $f$ is convex;
	\item Assumptions \ref{Assumption:B}  and \ref{Assumption:A} hold, but Assumption \ref{Assumption:C} does not hold.
\end{enumerate}
So, from the convergence Section~\ref{Sec:4}, we can solve this problem using the BPGe algorithm and it is guaranteed that any limit point of the sequence generated by BPGe is a stationary point of the objective function $\Psi$.

An important point in any iterative method is to define suitable error control techniques. As discussed in Section~\ref{Sec:3},   \texttt{EXIT} conditions of the experiments are set when iterations exceed $5000$ times or $ \|x^k-x^{k-1}\|/\max\{1, \|x^k\| \} \leq 10^{-6}$ (as in \cite{wen2017linear}).

In the tests, the entries of $A \in \mathbb{R}_{+}^{m\times d}$ and $x\in \mathbb{R}_{+}^d$ are generated following independent uniform distribution over the interval $[0, 1]$. We consider the case $g(x) \equiv 0$, i.e., we solve the inverse problem without
regularization, so now the minimization problem is the standard Poisson
type maximum likelihood estimation problem (modulo change of sign to pass to a minimization problem).

As these methods (BPG and BPGe) can be applied to both, \emph{overdetermined} ($m>d$) and \emph{underdetermined} ($m<d$) problems, we have performed numerical tests on both cases. First, we present the results obtained in the \emph{overdetermined} case.
As commented before, the main parameters in BPGe algorithm are the stepsize $\lambda$ and the parameter $\rho$. In order to study briefly the most suitable set of parameters, we analyze the influence of both parameters $\{ \lambda, \, \rho \}$ in Figure~\ref{Fig:1}. In all the pictures we show the evolution of $\| \Psi(x_k)- \Psi(x^\ast)\|$ (being $x^\ast$ the approximate solution obtained at termination of each respective algorithm) with respect to the iteration number $k$. With this figure we can study the influence of the parameters with respect to the size of the problem (measurements $m$) with fixed dimension $d= 100$.
Globally, we observe that the value $\rho = 0.99$ has the best results, even if for some cases, the set of initial conditions gives rise to a very fast convergence (as in the cases of using $\lambda = 1/(2L)$ for $m=5000$ and $\rho = 0.95$, where we have a fast linear convergence instead of sublinear). Note that this kind of differences can be observed on other situations, but the average behaviour tells us that the best performance is when we take $\rho = 0.99$. On the other hand, similar comments can be said with respect to the stepsize parameter $\lambda$. The general situation also recommends us to take the highest value $\lambda = 1/L$ (also for both algorithms BPGe and BPG).

In Figure~\ref{Fig:2a}, now with the fixed parameter values $\{ \lambda = 1/L, \, \rho = 0.99 \}$ and for the \emph{overdetermined} ($m>d$) case, we show the evolution of the objective function $\Psi(x_k)$ vs. iteration number and for several problem sizes (measurements $m$) with fixed vector dimension $d= 100$. We observe that always the BPGe algorithm is much faster than the BPG one. In order to observe more clearly the faster convergence, we present in Figure~\ref{Fig:2} much more simulations but now showing the evolution of $\| \Psi(x_k)- \Psi(x^\ast)\|$. We note that the differences of both methods are bigger for low dimension $d$ problems, in fact for the most overdetermined problems $m\gg d$.

In the \emph{underdetermined} case we also analyze the influence of both parameters $\{ \lambda, \, \rho \}$ in Figure~\ref{Fig:3} with respect to the size of the problem (measurements $m$) with fixed dimension $d= 5000$.
Now, we observe that the value of the parameter $\rho$ seems to not affect too much on the global performance of the method, so we will take the value $\rho = 0.99$ when we fix the parameter. On the other hand, similar comments as in the \emph{overdetermined} case can be said with respect to the stepsize parameter $\lambda$. Now the behaviour is quite regular, and no cases of very fast convergence have been observed, and the fastest convergence is obtained for the highest value $\lambda = 1/L$ (also for both algorithms BPGe and BPG). Therefore, in the rest of tests on this paper we fix the parameter values $\{ \lambda = 1/L, \, \rho = 0.99 \}$.

In Figure~\ref{Fig:4}, now with the fixed parameter values $\{ \lambda = 1/L, \, \rho = 0.99 \}$  and for the \emph{underdetermined} ($m<d$) case, we observe that always the BPGe algorithm is much faster than the BPG one. But, similarly as in the \emph{overdetermined} case, the differences are bigger when we use the methods for larger ratios $d/m$, that is, for the most underdetermined problems $m\ll d$.

Finally, in Table~\ref{table1} we give the CPU-time and number of iterations for different sizes of problems (number of data $m$ and  dimension $d$) for two values of the $\lambda$ parameter ($\lambda=1/L$ and $1/(3L)$) for \emph{overdetermined} (top) and \emph{underdetermined} (bottom) cases. From the simulations we observe that when the problem has not a very big size (probably because in these other cases longer simulations are needed) the ratios among both methods provide an interesting speed-up, and in most cases the \texttt{EXIT} strategy stops the BPGe algorithm before the maximum number of iterations is reached. On the other hand, we observe that the CPU-time and iteration number ratios are quite similar, and so there are little differences between them. Note that the BPGe algorithm has an extra step, the line search method of Algorithm~2, but it increments quite a few the final CPU-time. On the table we have remarked three discordant cases (superscript --a--) related with a fast linear convergence, instead of sublinear. This is illustrated, for example, on the left bottom plot of Figure~\ref{Fig:1} ($\rho=0.99$,$m=1000$)  where the green curve, corresponding to $\lambda=1/(3L)$ converges faster than the other colours (as it also occurs in other plots of the same figure). Note that for an \emph{overdetermined} problem with random data some initial conditions and data may be led to a faster convergence. For the \emph{underdetermined} problem there is a regular behaviour in all the simulations.

Therefore, in the Poisson Linear Inverse Problems tests the BPGe algorithm presents a faster performance compared with the BPG algorithm, giving an interesting option for real problems.

\subsection{Application to Quadratic Inverse Problems}
 In the second test (taken from  \cite{bolte2018first}) we show that BPGe algorithm can deal with a nonconvex Quadratic Inverse Problem (QIP) in which the differentiable term has no globally gradient Lipschitz continuous property. This problem is a natural extension of the linear inverse problem, but now using quadratic measurements. It appears in many popular applications, such as signal recovery \cite{beck2013sparsity} and phase retrieve \cite{luke2017phase} from the knowledge of the amplitude of complex signals.

 A general description of the Quadratic Inverse Problem is to find the vector $x \in \mathbb{R}^d$ that solves the system
 \[
 x^T A_i x \simeq b_i, \quad i=1,\ldots,m
 \]
 being $\{A_i \in \mathbb{R}^{d \times d} \, | \, i=1,\ldots,m \}$ a set of symmetric matrices that describes the model, and $b = (b_1, \ldots, b_m) \in \mathbb{R}^m$ a vector of usually noisy measurements.

Following the formalism given in \cite[section 5.1]{bolte2018first}, this problem can be formulated as a nonconvex minimization problem as:
\begin{align*}
\tag{QIP} \min \left\{ \Psi(x):= \frac{1}{4} \sum_{i=1}^m (x^TA_ix-b_i)^2 +\theta g(x):x\in \mathbb{R}^d\right\},
\end{align*}
where $\theta>0$ is used to weigh matching the data fidelity criteria and its regularizer $g$. In our experiments, we take a convex $l_1$-norm regularization function $g(x)=\|x\|_1$. Note that the first function $f(x)$ is a nonconvex differentiable function but that does not admit a global Lipschitz continuous gradient.

The main quality of the BPG and BPGe algorithms (as noted to the BPG in \cite{bolte2018first}) is that these methods can solve the broad class of problems (QIP). To apply BPG and BPGe on the QIP model properly, we first need to identify a suitable function $h$ (Definition~\ref{Definition:Breg}). In \cite{bolte2018first}, a proper choice has been given as:
$$h(x) = \frac{1}{4}\|x\|_2^4 + \frac{1}{2}\|x\|_2^2,$$
and so now the Bregman distance is given by
$$
D_h(x,y) = \{h(x)-h(y)-(\| y \|^2y+y)^T(x-y)\}.
$$

When $L$ is chosen such that $L \geq \sum_{i=1}^m \left( 3\|A_i\|^2+ \|A_i\||b_i|\right)$
then by  \cite[Lemma 5.1]{bolte2018first}, \emph{$L$-smad} condition (Definition~\ref{Definition:Smad}) holds for the selected functions $f(x)$, $g(x)$ and $h(x)$. Besides, according to the same analysis in \cite[Lemma 5.1]{bolte2018first}, we could derive the relative weakly convex parameter as $\mu \geq \sum_{i=1}^m \|A_i\||b_i|$.
In conclusion, we have that:
\begin{enumerate}[(i)]
	\item $(f,h)$ is \emph{$L$-smad}, $f$ is $\mu$-relative weakly convex to $h$.
	\item Assumptions~\ref{Assumption:B} and \ref{Assumption:A} are easily verified.
	\item $f, g, D_h$ are all semi-algebraic, (see for example \cite{bolte2014proximal}). One can show inductively that $H_{M}(x,y)=\Psi(x)+ M D_h(x, y)$ is semi-algebraic, thus it has KL property (Definition~\ref{Definition:KL}) at any point $(x, x)$. Besides, we could verify that Assumption \ref{Assumption:C} holds.
\end{enumerate}
It means, from the convergence Section~\ref{Sec:4}, that the sequences generated by BPGe algorithm converge to a critical point of the objective function $\Psi$.

Here, we perform several numerical tests to compare the behaviour of the BPGe and BPG algorithms. As we did with the previous problem (PLIP), we have designed two main families of experiments, considering \emph{overdetermined} ($m > d$) and \emph{underdetermined} ($m< d$) cases.  To that goal we set different values of $m$ and $d$, and we generate $m$ random rank-1 matrices $A_i=a_i a_i^T$ in $\mathbb{R}^{d\times d}$, where the entries of the vectors $a_i$ are generated following independent Gaussian distributions with zero mean and unit variance. The accurate $x^{\ast}:= \arg\min\{\Psi(x): x \in \mathbb{R}^d \} $ is chosen as a sparse vector (the sparsity is $5\%$) and $b_i = x^TA_ix^{\ast}, \,  i = 1,\dots, m$. We set the weight parameter $\theta =1$ as default.

\begin{table*}[htb]
  \centering
  \small
%\begin{center}{Overdetermined case $m>d$}\end{center}
  \hspace*{-1.2cm}{\tt \noindent\begin{tabular}{|r|r|rrrr|rrrr|}
    \cline{3-10}
    \multicolumn{2}{c|}{} & \multicolumn{4}{c|}{$\lambda = (1/L)$}
       & \multicolumn{4}{c|}{$\lambda = 1/(3L)$} \\
    \hline
    m & d   & $T_{BPGe}$ & $\frac{T_{BPGe}}{T_{BPG}}$ & $N_{BPGe}$ & $\frac{NI_{BPGe}}{NI_{BPG}}  $& $T_{BPGe}$ & $\frac{T_{BPGe}}{T_{BPG}}$ & $N_{BPGe}$ & $\frac{NI_{BPGe}}{NI_{BPG}}$   \\
    \hline
    \hline
\rowcolor[gray]{0.9}
10000 &    10&     0.29 &  0.53 &   146 &  0.35 & 0.48  & 0.28 & 248  & 0.20 \\
      &    50&     0.57 &  0.14 &   271 &  0.14 & 4.41  & 0.10 & 480  & 0.10 \\
      &   100&     1.16 &  0.10 &   339 &  0.08 & 8.73  & 0.19 & 655  & 0.13 \\
      &   200&    10.15 &  0.15 &   608 &  0.12 & 17.24 & 0.31 & 1668 & 0.33 \\
\rowcolor[gray]{0.9}
20000 &    10&     0.24 &  0.34 &   143 &  0.34 & 0.39  & 0.26 & 304  & 0.26 \\
      &    50&     4.09 &  0.14 &   266 &  0.14 & 6.80  & 0.16 & 465  & 0.09 \\
      &   100&     1.79 &  0.09 &   323 &  0.09 & 9.39  & 0.16 & 605  & 0.12 \\
      &   200&    66.97 &  0.18 &   602 &  0.12 & 40.74 & 0.28 & 1413 & 0.28 \\
\rowcolor[gray]{0.9}
30000 &    10&     0.40 &  0.44 &   145 &  0.35 & 3.22  & 0.27 & 231  & 0.20\\
      &    50&     1.48 &  0.15 &   261 &  0.15 & 10.42 & 0.10 & 472  & 0.10\\
      &   100&    32.79 &  0.09 &   331 &  0.09 & 15.06 & 0.12 & 594  & 0.12\\
      &   200&   153.17 &  0.12 &   554 &  0.11 & 487.62& 0.27 & 1341 & 0.27\\
    \hline
  \end{tabular}}
\caption{Quadratic Inverse Problems tests: CPU-time and number of iterations for different cases of $m$ (number of data) and $d$ (dimension) for two different values of the $\lambda$ parameter for the \emph{overdetermined} case. $T_{BPGe}$ and $T_{BPG}$ denote the CPU-time of BPGe and BPG algorithms, and $N_{BPGe}$ and $N_{BPG}$ the number of iterations to reach the \texttt{EXIT} criteria.  \label{table2}}
  \end{table*}

  \begin{figure*}[htbp]
  \centerline{\includegraphics[width=0.92\textwidth]{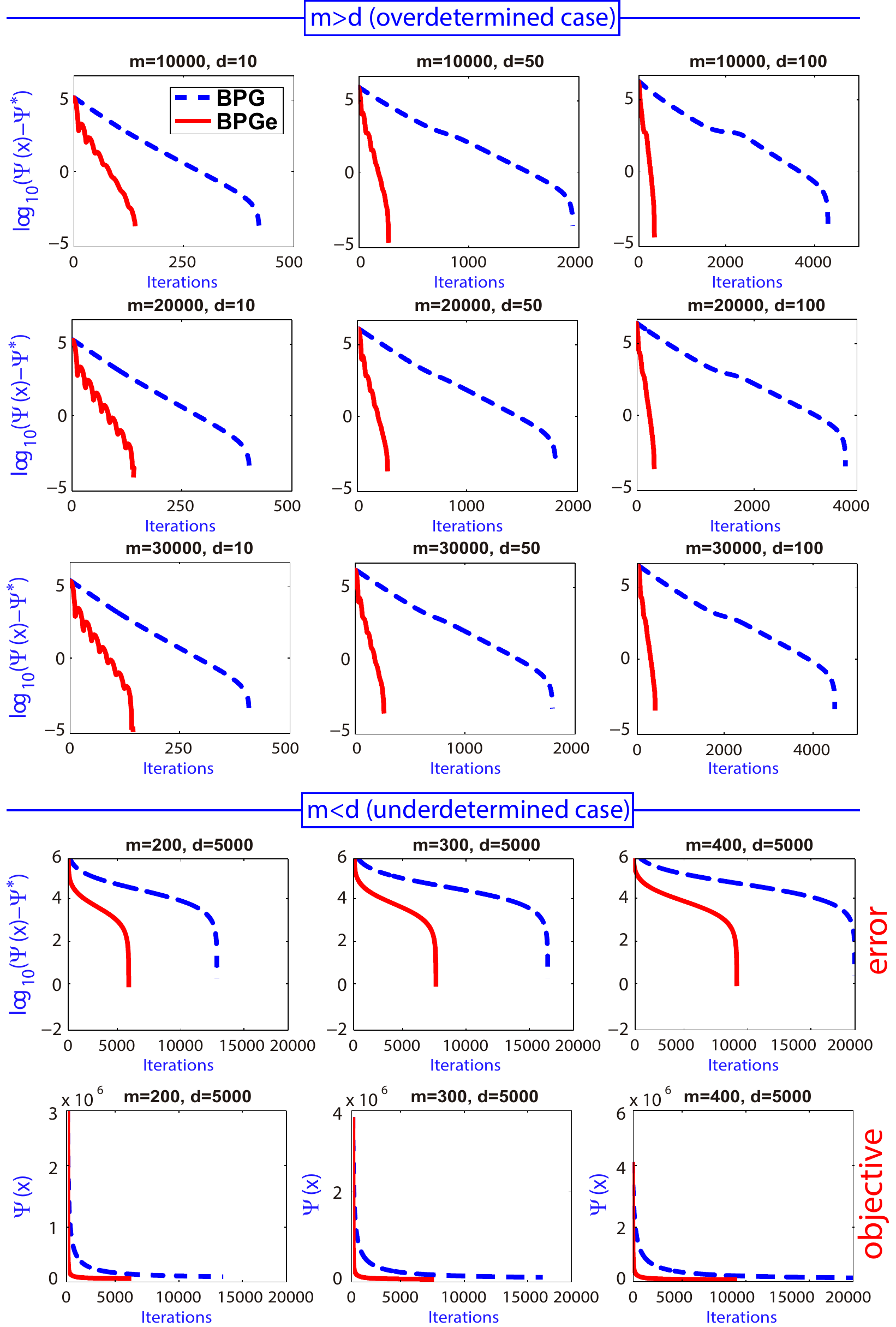}}
  \caption{Quadratic Inverse Problems tests (\emph{overdetermined} case $m > d$) and (\emph{underdetermined} case $m < d$): evolution of the difference $\| \Psi(x_k)- \Psi(x^\ast)\|$ vs. iteration number, using the parameter values $\{ \lambda = 1/L, \, \rho = 0.99 \}$ and for several problem sizes (measurements $m$ and vector dimensions $d$) and evolution of the objective function $\Psi(x_k)$.
  \label{Fig:6}}
\end{figure*}

As a first performance comparison, in Table~\ref{table2} we give the CPU-time and number of iterations for different sizes of problems (number of data $m$ and  dimension $d$) for two values of the $\lambda$ parameter ($\lambda=1/L$ and $1/(3L)$) for \emph{overdetermined} case. The values $T_{BPGe}$ and $T_{BPG}$ denote the CPU-time of BPGe and BPG algorithms, and $N_{BPGe}$ and $N_{BPG}$ the number of iterations to reach the \texttt{EXIT} criteria, respectively. From the simulations we observe that the ratios among both methods provide an interesting speed-up, and the \texttt{EXIT} strategy stops the BPGe algorithm before the maximum number of iterations ($k_{max}=5000$ in this case) is reached. On the other hand, we observe that the CPU-time and iteration number ratios are quite similar, and so there are little differences between them. Therefore, we note again that although the BPGe algorithm has an extra step (the line search method of Algorithm~2), it increments quite a few the final CPU-time. Also, from the data we observe that although the ratio for the BPGe and BPG algorithms for $\lambda = 1/(3L)$ is quite good, the option BPGe with $\lambda = 1/L$ performs many fewer iterations, and so it is the recommended option.

In Figure~\ref{Fig:6}, with the fixed parameter values $\{ \lambda = 1/L, \, \rho = 0.99 \}$ and for the \emph{overdetermined} ($m>d$)
and \emph{underdetermined} ($m<d$) cases, we show the evolution of $\| \Psi(x_k)- \Psi(x^\ast)\|$. In this problem we observe that
the performance of the accelerated BPGe algorithm for the \emph{overdetermined} case is quite good, giving a linear convergence. In the case of \emph{underdetermined} the behaviour seems to be sublinear, and it needs more iterations to reach the desired value (in this simulations $k_{max}=20000$). In both cases the BPGe algorithms performs much better than the BPG one.
For the \emph{underdetermined} case we also show the evolution of the objective function $\Psi(x_k)$ vs. iteration number to see that in this case the objective function takes large values, and therefore, when applying the \texttt{EXIT} strategy the required precision is obtained (a relative error $< 10^{-6}$) giving not too small absolute values.

Therefore, again in the Quadratic Inverse Problems tests the BPGe algorithm presents a faster performance compared with the BPG algorithm, giving an interesting option for real problems.

\section{Conclusions}

This work have joined two powerful methods to solve large-scale minimization problems and we proposed a new accelerated Bregman proximal gradient algorithm (BPGe) useful for nonconvex and nonsmooth minimization problems. On one hand, we have taken the BPG algorithm~\cite{bauschke2016descent} able to deal with non-globally Lipschitz continuous gradient problems. Firstly defined for the convex case~\cite{bauschke2016descent} and later extended to the nonconvex case by \cite{bolte2018first}. And on the other hand, the accelerated extrapolation algorithm (used for instance in the PG algorithm \cite{wen2017linear}). The use of the Bregman distance paradigm permits to enlarge the number of problems to work with, because we do not need the assumption of global Lipschitz gradient continuity. And with the extrapolation technique the convergence of the method is accelerated.

In this paper we have studied the convergence of the new method, and we have proven that any limit point of the sequence generated by BPGe algorithm is a stationary point of the problem by choosing parameters properly. Besides, assuming Kurdyka-{\L}ojasiewicz property, we have proven the whole sequences generated by BPGe converges to a stationary point.

Finally, we have applied it to two important practical problems that arise in many fundamental applications (and that not satisfy global Lipschitz gradient continuity assumption): Poisson linear inverse problems and quadratic inverse problems, for both, \emph{overdetermined} and \emph{underdetermined} cases.  In these tests  the BPGe algorithm have shown faster convergence results than the BPG algorithm, and so the new BPGe algorithm seems to be an interesting methodology.

%\section*{References}

\bibliographystyle{num}

\end{document}